\theoremstyle{plain}
\newtheorem{thm}{\protect\theoremname}[section]
 \newcommand\thmsname{\protect\theoremname}
 \newcommand\nm@thmtype{theorem}
 \theoremstyle{plain}
  \theoremstyle{remark}
  \newtheorem{rem}[thm]{\protect\remarkname}
  \theoremstyle{definition}
  \newtheorem*{example*}{\protect\examplename}
  \theoremstyle{definition}
  \theoremstyle{plain}
  \newtheorem{lem}[thm]{\protect\lemmaname}
  \theoremstyle{plain}
  \newtheorem{prop}[thm]{\protect\propositionname}
  \theoremstyle{plain}
  \theoremstyle{definition}
  \newtheorem{my@rem}[thm]{Remark}
  \renewenvironment{rem}{\begin{my@rem}}{\end{my@rem}}
  \providecommand{\examplename}{Example}
  \providecommand{\lemmaname}{Lemma}
  \providecommand{\propositionname}{Proposition}
  \providecommand{\remarkname}{Remark}
  \providecommand{\theoremname}{Theorem}
\providecommand{\theoremname}{Theorem}
 \providecommand{\corollaryname}{Corollary}
\def\Q{{\Bbb Q}}
\def\R{{\Bbb R}}
\def\Z{{\Bbb Z}}
\def\G{{\Bbb G}}
\def\A{{\Bbb A}}
\def\O{{\mathcal O}}
\def\P{{\Bbb P}}
\def\C{{\Bbb C}}
\def\F{{\Bbb F}}
\def\cR{{\mathcal R}}
\def\cE{{\mathcal E}}
\def\cF{{\mathcal F}}
\def\cL{{\mathcal L}}
\def\x{{\bf x}}
\def\w{{\bf w}}
\def\CVD{{\hfill\hfil{\lower 2pt\hbox{\vrule\vbox to 7pt
{\hrule width  5pt\varphifill\hrule}\varphirule}}}\par}
\title[On the Hilbert Property]{On the Hilbert Property and the fundamental group of algebraic varieties}
\author{Pietro Corvaja and Umberto  Zannier}\date{}
\begin{document}
\maketitle


\noindent {\bf Abstract}. In this paper we  review, under a  perspective which appears different from previous ones,  the so-called Hilbert Property (HP) for an algebraic  variety (over a number field); this is linked to Hilbert's Irreducibility Theorem and has important implications, for instance towards the Inverse Galois Problem. 

We shall observe that the HP is in a sense `opposite' to the Chevalley-Weil Theorem, which concerns  unramified covers;  this link shall  immediately entail the result that the HP can possibly   hold only for simply connected varieties (in the appropriate sense). In turn, this leads to new counterexamples to the HP, involving Enriques surfaces. 

In this view, we shall also formulate an alternative related property, possibly holding in full generality, and some conjectures which unify part of what is known in the topic.
These   predict that for a   variety with a Zariski-dense set of rational points, the validity of the HP, which is defined arithmetically, is indeed of purely topological nature.
Also, a consequence of these conjectures would be a positive solution of the Inverse Galois Problem.

 In the paper we shall also prove the HP for a  K3 surface related to the above Enriques surface, providing what appears to be the first example of a non-rational variety for which the HP can be proved. In an Appendix we shall further discuss, among other things,  the HP also for Kummer surfaces.  All of this basically exhausts the study of the HP for surfaces with a Zariski-dense set of rational points. 

\bigskip


\section{Introduction}


This paper is mainly concerned with a kind of  `re-reading,' 
so to say, of a circle of issues  around the so-called `Hilbert Property', concerning the set $X(k)$ of $k$-rational points for an algebraic variety $X/k$, where $k$ is a field (which for us shall be  a number field).  The Hilbert Property (abbreviated to HP in the sequel)  is well known to admit  important implications, for instance towards the Inverse Galois Problem (see \cite{SeTGT}, \cite{V}).

 In particular, we shall relate this property with the Chevalley-Weil theorem, which usually, in books or expositions, is dealt with separately from the above.   Further, we shall provide (in the shape of theorems) a number of examples, concerning surfaces,  which we have not found in the literature: we shall consider  new cases in which the HP holds and also shall provide new counterexamples. Moreover we shall  state   some general conjectures relating this kind of property with purely topological aspects of the varieties in question. 

\medskip

As a consequence of this viewpoint, we shall  also re-obtain  some known results,   and of course this article does not claim novelty in these cases.  However,  it seems to us that seldom the links alluded to above 
are presented together within the theory of the HP;  and  it is also in this sense that we hope to contribute to any small  extent. \medskip

For convenience and completeness, we start by recalling in short some  definitions and known  facts about these topics, which shall also serve to 
 illustrate simultaneously  our purposes.

\medskip

Throughout,  algebraic varieties shall be  understood to be integral over $k$ and quasi-projective. By {\it cover} of algebraic varieties $\pi: Y\to X$ we shall mean a {\it dominant rational map of finite degree}. In particular, we must have $\dim Y=\dim X$, but $\pi$   need not be a morphism.


Any possible additional property of $\pi$ shall be specified explicitly. (In particular, in \S \ref{SS.fund}  we shall recall some facts concerning ramification of a cover, a concept especially relevant in this paper.) 


\subsection{\tt The Hilbert Property}\label{SS.HP}  We mainly refer to \cite{SeTGT} (see especially Ch. 3)  for the  theory, limiting ourselves to recall only a few definitions and properties. 

\medskip

\noindent{\tt Definition}. We say that $X/k$ has the {\it Hilbert Property}, abbreviated HP in the sequel, if, given any finite number of covers $\pi_i:Y_i\to X$, $i=1,\ldots ,r$, each of degree $>1$,  the set $X(k)\setminus\bigcup_{i=1}^r\pi_i(Y_i(k))$ is Zariski-dense in $X$.\footnote{In fact, it suffices to assume that this set is always nonempty; indeed, if the set happens to be contained in a proper Zariski closed subset, say  defined by $f=0$, it suffices to consider a further cover with function field $k(X)(f^{1/n})$ for suitable $n$ to lift all remaining points in $X(k)$, thus  making empty the complement with respect to this further cover.}


\medskip

This property is invariant by birational isomorphism over $k$.  

Any finite union of  a set $ \bigcup_{i=1}^r\pi_i(Y_i(k))$ as above,  together with another finite union $\bigcup_{j=1}^s Z_j(k)$ for proper subvarieties  $Z_j$ of $X$,  is usually called {\it thin} in $X(k)$. The HP may then be restated (as e.g. in \cite{SeTGT}) by saying that {\it $X(k)$ is not thin}. 

\medskip

Of course, for given $X$, this strongly depends on the ground field $k$ (for instance, if $k$ is algebraically closed the HP trivially fails); however, it is preserved by finite extensions, as  in \cite{SeTGT}, Prop. 3.2.1 (but the converse is not necessarily true). 

In the sequel, we shall often  omit explicit reference to $k$, which is supposed to be fixed. 

The relevant fields for the HP are those with `arithmetical' restrictions, so to say, and a field $k$ is called {\it Hilbertian} if the HP holds for some variety $X/k$ (in which case it is known that it holds for any $\P_n$).

 In the present  paper we shall consider  only number fields.

In this case (or more generally when $k$  is finitely generated over the prime field)  the HP is known to hold for projective spaces (Hilbert \cite{H}), for which several proofs are available (see \cite{B-G},  \cite{FJ}, \cite{L}, \cite{Sc}, \cite{SeMW}, \cite{SeTGT}, \cite{V}). Colliot-Th\'el\`ene and Sansuc \cite{CT-S} proved the HP for connected reductive algebraic groups over a Hilbertian field.\footnote{Such algebraic groups are rational varieties, but possibly not over $k$.}   
 Other examples may sometimes be constructed starting from these ones: for instance recently it has been shown in  \cite{B}  that if  the HP holds for $X,Y$ it holds for $X\times Y$.  
 In this paper an  example of a non (uni)rational surface with the HP, which appears to be new and not a direct consequence of previous results,  shall be given in Theorem \ref{T.Fermat}. 

\medskip

In the converse direction, a `trivial' reason for the failure of HP  occurs when  $X(k)$ is not Zariski-dense. Of course it can be of the utmost difficulty to establish this: Faltings' celebrated theorems cover e.g. the case of curves of genus $\ge 2$ (and more generally  subvarieties of abelian varieties which are not translates of abelian subvarieties); otherwise,  little is known unconditionally,  and  the Vojta conjectures   give in general   the expected geometrical conditions (see e.g. \cite{B-G}). 


But, concerning the HP itself,  the most interesting failures occur when $X(k)$ may be Zariski-dense and still the HP does not hold. 
In this sense,  the typical examples   that  are usually presented are curves of  genus $1$ over number fields: 
 the (weak) Mordell-Weil Theorem  provides, for every integer $m>1$,  finitely many points $p_i\in E(k)$ such that every $p\in E(k)$ is of the shape $p=p_i+mq$ for a $q\in E(k)$, so $E(k)$ is covered by the sets $p_i+mE(k)$, which are images of covers of degree ($=m^2$) $>1$. 
 
 The same argument applies to any abelian variety (and in turn to any curve of genus $>1$ embedded in its Jacobian, even forgetting about Faltings' theorem).
 
 \medskip

Further examples, in higher dimensions,  occur in the shape of  varieties $X$ mapping nontrivially to an abelian variety;  for smooth $X$,  this happens when the {\it Albanese variety} of $X$  is not trivial, or, equivalently, when the {\it irregularity} $q =\dim H^0(X,\Omega^1_X)$ is positive.  In these cases it may be shown that   the components of  the pullback of multiplication by $m$ produce  covers  of degree $>1$ for suitable $m$, and the failure of the HP for the abelian variety transfers to the variety. (We may agree to say that the examples obtained in this way are not {\it primitive}.) 

In dimension $2$, explicit instances in this direction are ruled surfaces with elliptic base (for which there is a map $f:X\to E$, $E$ elliptic curve, with rational generic  fiber), as for instance the symmetric square $E^{(2)}$ of an elliptic curve $E$, obtained on identifying $(x,y), (y,x)\in E^2$.\footnote{It is easy to see that $E^{(2)}$ is birationally  isomorphic  to $E\times\P_1$.} One may also take surfaces $(E\times F)/G$ where $E,F$ are elliptic curves and $G$ is a finite group acting without fixed points.\footnote{As remarked in \cite{Ha}, there are seven possibilities for a nontrivial such  $G$.} Again, these surfaces admit non-constant maps to elliptic curves.
More generally, quotients of abelian varieties by groups without fixed points also fall in this pattern (it may be shown that the abelian variety is isogenous to a product and that there is a nontrivial map from the quotient  to a factor).

In this paper we  give an example which escapes from all of these constructions:  a smooth surface with a Zariski-dense set of rational points, trivial Albanese variety and failing the HP (see Theorem \ref{T.Enriques}). 

 \medskip

Further, our  aim here is 
to point out  certain simple  links of this type of  property with    topological features of algebraic varieties, especially their fundamental group;  in particular, it is  this  connection which suggests and  leads to the said new examples of violation of the HP. 

Simultaneously,  it shall appear that it is equally natural to study a certain modified (weakened)  form of the HP, stated below in \S \ref{SS.modified}. 
 On the one hand, such property has been implicitly already proved for some varieties for which the HP fails; on the other hand, perhaps this property always holds.
  It is also to be remarked that this different property would admit in principle much the same applications as the HP. 
  
  All of this depends ultimately on ramification, and then we pause  for recalling  some simple concepts in this realm.
 

\subsection{\tt Fundamental groups and unramified covers}\label{SS.fund}  In this subsection we review some definitions and facts concerning ramification, and we define an algebraic notion,  useful for us,  of being simply connected. (Here we could have relied on the well-known notion of   the profinite completion of the usual  fundamental group; see e.g. M\'ezard's paper \cite{M} for this and more. However for simplicity we have preferred to repeat here  very briefly some relevant constructions and definitions; this shall also allow us to work with singular varieties, and shall make the article more self-contained.) 

\medskip

Let $X/k$ be a (quasi-projective) algebraic variety over a number field $k$. We say that a {\it finite map} $Y\to X$, from a variety $Y$, is unramified (at a point $y\in Y$)  if its differential  (at $y$) is an isomorphism between the corresponding tangent spaces.  Otherwise we say that the map is ramified at $y$, and we call $y$ a ramification point and $x=\pi(y)$ a branch point. 

The set of ramification  points is a proper closed subset of $Y$. Its image in $X$ is generally called the {\it branch locus}. \footnote{However some authors adopt the converse terminology.}

\medskip

This notion does not immediately apply for covers in our general sense, that are merely maps of finite degree. To cope with this, we argue as follows.

We note that given any cover $\pi: Y\to X$ in the previous sense, there exists a normal variety $Y'$ with a birational map $f_Y:Y'\to Y$,   a  normalization    
$ f_X:X^\prime\to X$   and a finite map $\pi^\prime: Y^\prime\to X^\prime$ such that $f_X\circ\pi^\prime = \pi\circ f_Y$. \footnote{The variety $Y'$ may be obtained e.g. by taking on affine pieces the integral closure of the affine ring of $X$ in the function field $k(Y)$, and then by gluing these affine parts.} 

 Then we may define the ramification of the original cover as the ramification of the finite morphism of normal varieties $\pi': Y'\to X'$. This is well defined because the normalization (of $X$)  is unique up to isomorphism.
 
Note that  if e.g. $X$ is smooth, so in particular $X'=X$,  $y\in Y'$ is a ramification point if and only if the number of inverse images $|\pi^{-1}(\pi(y))|$ is strictly smaller than the degree of $\pi$, whereas  otherwise it equals this degree.


Morever, after Zariski's {\it Purity  Theorem},  in case $X$ is  smooth   the branch locus is of pure codimension $1$.
 So in this case we may speak of the ramification divisor in $Y'$ and of  branch divisor in $X$ \footnote{We do not consider here multiplicities; see \cite{B-G} for this.}.

 \medskip


 \noindent{\tt Definition}. We say that a variety $X/k$ is {\it algebraically simply connected} if    any cover of degree $>1$ of its normalization $X'$  is  ramified (somewhere). 

\medskip 

We note that,  according to our definition, the algebraic simply connectedness must be checked by passing to the normal model.


\medskip

After fixing an embedding $k\hookrightarrow\C$, we can associate to $X$ the topological space $X(\C)$ of its complex points and so define its topological fundamental group $\pi_1(X)$. \footnote{It was proved by Serre \cite{SeFG} that different embeddings can produce  different fundamental groups; however the properties relevant for us are independent of this embedding, and the same holds for the profinite completion of the fundamental group.}  

In case $X$ is smooth,   if the ramification divisor of a cover is empty, then there is no ramification at all and hence  the morphism $Y'\to X$  defines a topological covering of the space $X(\C)$. 

In general, we have  the following

\begin{prop} \label{P.asc}   If $X$ is normal then $X$ is algebraically simply connected if and only if $\pi_1(X)$ has no subgroup of finite index $>1$.

\end{prop}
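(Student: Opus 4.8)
The plan is to run the classical dictionary between finite unramified covers of a normal variety and finite topological coverings of its space of complex points, and then to reduce to an elementary remark about $\pi_1$. Since $X$ is normal, $X'=X$, so a cover of $X'$ of degree $>1$ is simply a cover $\pi\colon Y\to X$ of degree $>1$, and by the constructions of \S\ref{SS.fund} its ramification is read off from a finite morphism $\pi'\colon Y'\to X$ of the same degree between normal varieties.

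\emph{If $X$ is not algebraically simply connected.} Take an unramified cover $\pi\colon Y\to X$ of degree $d>1$; passing to the normal model we may assume $Y$ normal and $\pi\colon Y\to X$ a finite morphism with $\Omega_{Y/X}=0$. As the fibres all have length $d$ and $X$ is normal, $\pi$ is finite étale, and hence --- extending to the (possibly singular) normal case the observation already recorded in \S\ref{SS.fund} for smooth $X$ --- the induced map $Y(\C)\to X(\C)$ is a $d$-sheeted topological covering. Since $Y$ is irreducible, $Y(\C)$ is connected, so $\pi_*\pi_1(Y(\C))\le\pi_1(X)$ has index $d>1$.

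\emph{Conversely}, suppose $H\le\pi_1(X)$ has finite index $n>1$, and let $p\colon Z\to X(\C)$ be the connected $n$-sheeted covering with $p_*\pi_1(Z)=H$. The point is to algebraise $p$: to realise it as $Y(\C)\to X(\C)$ for a finite unramified cover $Y\to X$ of varieties. This is the Riemann Existence Theorem in the version valid over a normal base --- equivalently, the equivalence between finite topological coverings of $X(\C)$ and finite étale covers of $X_\C$ (Grauert--Remmert; see also \cite{M}). Concretely: choose a normal projective $\bar X\supseteq X$, extend $Z$ over $\bar X(\C)$ to a finite branched covering with branch locus contained in $\bar X\setminus X$, algebraise it by the classical Riemann Existence Theorem on $\bar X$, and take for $Y$ the normalisation of $X$ in the function field thus produced; then $Y\to X$ is unramified of degree $n>1$. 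One then descends the field of definition from $\C$ to a number field, enlarging $k$ if necessary --- harmless here, as the condition on $\pi_1$ is insensitive to such extensions --- obtaining over $k$ an unramified cover of $X$ of degree $>1$; so $X$ is not algebraically simply connected.

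Finally, a group has a subgroup of finite index $>1$ exactly when it admits a connected finite-sheeted covering of degree $>1$, which closes the argument. The one substantive step is the algebraisation in the converse direction: for smooth $X$ it is classical and amounts essentially to the remark already made in \S\ref{SS.fund}, while the real work is to accommodate an arbitrary \emph{normal} $X$, whose complex points need not form a manifold (together with the routine descent of the field of definition). Granting that, both implications are formal.
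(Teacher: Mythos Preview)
Your proof is correct and follows essentially the same route as the paper's: both implications rest on the dictionary (Grauert--Remmert) between finite topological coverings of $X(\C)$ and finite unramified algebraic covers of $X$.

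Two remarks on points of contact. First, the paper is explicit about one step you pass over quickly: once the connected topological cover $Z$ has been algebraised to a variety $Y$, one must check that $Y$ is \emph{irreducible} as a variety, not merely that $Y(\C)$ is connected. The paper does this via Zariski's theorem that a normal point is topologically unibranch, so distinct irreducible components of $Y$ would have disjoint $\C$-points. Your normalisation-in-a-function-field construction yields irreducibility for free, but then one should say why the resulting $Y(\C)\to X(\C)$ really is the original $Z$; this again comes down to unibranchness (or, equivalently, to the fact that a finite \'etale cover of a normal scheme is normal, so its connected and irreducible components coincide). Second, your descent from $\C$ to a number field is unnecessary: the paper's own argument produces the unramified cover directly over $\C$ and uses it as is to contradict algebraic simple connectedness, without any descent. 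Your ``Concretely'' sketch of the algebraisation (extend to a branched cover of $\bar X(\C)$, then apply GAGA) is fine as motivation but is really a sketch of Grauert--Remmert itself rather than an independent argument; since you also invoke the theorem directly, this does no harm.
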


\begin{rem} \label{R.asc} We note at once  that the assumption that   $X$ is  normal cannot be omitted, as shown by the example of  the nodal curve $zy^2=x^3+zx^2$ in $\P_2$, obtained by identifying two points of $\P_1$. The normalisation of this curve is $\P_1$, so it is algebraically simply connected in our sense, however the topological fundamental group  of the nodal curve over $\C$ is $\Z$.

We owe to F. Catanese  examples showing that the normality assumption cannot be omitted even for the converse implication; indeed, there are simply connected surfaces whose   normalisation is the product of a line by a hyperelliptic curve.\footnote{One of Catanese's examples consists of a product $\P_1\times E$ where $E$ is an elliptic curve. Taking the quotient by the equivalence relation $(t_1,p_1)\sim (t_2,p_2)$ if and only if $t_1=t_2=0$ and $p_2 = - p_1$ or $(t_1,p_1)=(t_2,p_2)$, we obtain a variety $X$ which may be shown to be simply connected. Its normalization is just the original $\P_1\times E$, whose fundamental group is $\Z^2$.} 


\end{rem}

\begin{proof}[Proof of Proposition]  Let us start by assuming $X$ normal and algebraically simply connected. Suppose by contradiction that $H$ is a subgroup of $\pi_1(X)$ of finite index $d>1$. This corresponds to a connected topological cover $Y$ of $X(\C)$ of degree $d$. By a theorem of Chow  (in the compact case) and of Grauert and Remmert (in the general case) such a topological  cover can be endowed with a (unique) structure of algebraic variety (see \cite{SeTGT}, Ch. 6).  We contend that  $Y$ is an irreducible variety.  This  follows from  a theorem of Zariski (see \cite{Mu}, page 52) asserting that under the normality assumption each point of $X$ is {\it topologically unibranch}  (in the sense of Definition (3.9) in \cite{Mu}, page 43). (Indeed, if $Y$ were reducible, then removing singular points would disconnect it in a neighbourhood of any singular point.) 

But now the cover provided by $Y$ would contradict the property of being algebraically simply connected.

\smallskip

Let us now consider the converse implication.  If $X$ is normal and not algebraically simply connected, let $\pi: Y\to X$ be a finite cover with no ramification.   This  would correspond to a topological cover, and thus to a subgroup of finite index $>1$ in the fundamental group.

This concludes the proof.
\end{proof}

It is important to notice that the notion of algebraic simply connectedness for projective varieties is not a birational invariant, the same as for  the topological notion. (Actually, each projective algebraic variety is birationally isomorphic to a hypersurface of a projective space, and by the Lefschetz hyperplane section theorem, each hypersurface  of $\P_n$, for $n\geq 3$, is simply connected.)

However it is a birational invariant for smooth projective varieties: so
 if one smooth projective model of an algebraic variety is algebraically  simply connected, then every other smooth model is. 
 
 Further, it may happen that a normal model is simply connected, and a smooth model is not:
 an instance is provided by the normal but non smooth model of the Enriques surface of Theorem \ref{T.Enriques}. This is an irreducible  hypersurface in $\P_3$ and therefore the space of its complex points is simply connected by the mentioned theorem of Lefschetz. Also,  it has only isolated points as singularities, and hence is normal (e.g. by a criterion of Serre). However  a smooth model is an Enriques surface, admitting an unramified cover of degree $2$, which shall be implicitly described in our construction below.\footnote{On the contrary, if   a normal model is not simply connected, the same holds for a desingularization, as can be seen on taking  the pullback of a possible cover to the smooth model.}

\medskip

Let us now  state some results in detail.

\medskip

\subsection{\tt Some results} We start with counterexamples.

\medskip

\underline{\tt New failures of the HP}. We do not know of any  prior example 
of a variety over a number field with a Zariski-dense set of rational points but without the HP,   which cannot be  reduced as above  to  abelian varieties. We   give such an example in dimension $2$, originated from some considerations below, and we shall prove in  \S \ref{SS.Enriques} the following

\begin{thm} \label{T.Enriques} The (irreducible) surface defined in $\P_3$ by
 the equation $x_0x_2^4+x_1x_3^4=x_0^2x_1^3+x_0^3x_1^2$ has a Zariski-dense set of rational points, has not the Hilbert Property over $\Q$, and does not admit  non-constant rational maps to abelian varieties.
\end{thm}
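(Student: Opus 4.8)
The plan is to exhibit the surface $S\subset\P_3$ as a degree-two quotient of an explicit K3 surface, which will simultaneously give all three assertions. First I would observe that the equation has the symmetry structure of an Enriques-type construction: the left-hand side $x_0x_2^4+x_1x_3^4$ is bihomogeneous and the right-hand side $x_0^2x_1^2(x_0+x_1)$ depends only on $x_0,x_1$. The natural move is to pass to the double cover branched appropriately, or rather to realize $S$ as $T/\iota$ for a K3 surface $T$ and a fixed-point-free involution $\iota$. Concretely, set $y_i^2$ in place of $x_{i+2}^2$ (or introduce new coordinates $u,v$ with $u^2 = $ something, $v^2=$ something) so that the quartic terms $x_2^4,x_3^4$ become squares of quadrics; the involution $\iota$ will act by $(u,v)\mapsto(-u,-v)$ on the relevant affine model, acting freely because the would-be fixed locus $u=v=0$ forces $x_0^2x_1^2(x_0+x_1)=0$, which I would check meets the surface only in points not lying on $T$ (or is excluded by the smooth model). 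Verifying that $T$ is a smooth K3 (trivial canonical bundle, simply connected) and that $\iota$ is genuinely free on a smooth projective model is the technical heart; this is where I expect most of the work, and it is presumably carried out in detail in the section proving Theorem~\ref{T.Fermat} on the associated K3.

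Granting that $S$ has a smooth model which is an Enriques surface, the failure of the HP is then immediate from the general principle developed in the paper: an Enriques surface carries an unramified connected double cover (its K3 cover $T$), so by the Chevalley--Weil theorem the rational points of $S$ lift, after a fixed finite extension, to $T$; pulling this cover back over $\Q$ (or using that the two Galois-conjugate sheets already defined over $\Q$ after a twist cover $S(\Q)$) one writes $S(\Q)$ as contained in the union of images of finitely many covers of degree $>1$. More precisely, $\pi_1$ of the smooth model is $\Z/2\Z$, so it is not algebraically simply connected, and by the contrapositive of the main observation linking HP to simple connectedness, $S$ cannot have the HP. I would spell out the descent: the unramified $\Z/2$-cover over $\bar\Q$ descends to finitely many $\Q$-forms $T_j\to S$, and $S(\Q)=\bigcup_j \pi_j(T_j(\Q))$, each $\pi_j$ of degree $2$, contradicting the HP directly (not merely via a topological citation).

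For Zariski-density of $S(\Q)$, I would exhibit an explicit rational parametrization of a suitable one-parameter family, or better, use the fibration structure: projecting from the singular locus, or viewing $S$ via the map to $\P^1$ given by $[x_0:x_1]$, produces a pencil whose generic fibre is a curve of genus $1$ (a quartic $x_2^4 = c x_3^4 + d$ type equation in a plane); I would find a value of the parameter for which this elliptic curve has a rational point and positive rank, or simply display infinitely many easy rational points (e.g. setting $x_0=0$ forces $x_1 x_3^4=0$, giving the line $x_0=x_1=0$, and perturbing gives genuinely two-dimensionally dense points). Since HP failure and the map-to-abelian-varieties statement both need $S(\Q)$ Zariski-dense to be non-vacuous, this must be done with a little care, but it is a finite computation.

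Finally, for the non-existence of non-constant rational maps $S\dashrightarrow A$ to abelian varieties: the Albanese of an Enriques surface is trivial (indeed $H^0(\Omega^1)=0$, and more strongly $H^1(\O)=0$ so the irregularity $q=0$), and any rational map from a smooth projective variety to an abelian variety factors through the Albanese; hence every such map is constant. I would simply invoke that the smooth model of $S$ is an Enriques surface together with the standard vanishing $q=0$ for Enriques surfaces, which is the point that makes this counterexample ``primitive'' in the terminology of the introduction --- it is precisely not reducible to the abelian-variety examples. The main obstacle overall is the explicit verification that the given singular quartic-type hypersurface really has an Enriques surface as smooth model with a free involution defined over $\Q$; once that geometric identification is in hand, the three conclusions follow from, respectively, Chevalley--Weil plus the HP--simple-connectedness link, an elementary point search, and $q=0$ for Enriques surfaces.
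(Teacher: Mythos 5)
Your overall scaffold agrees with the paper's: realize a smooth model of the surface $\cE$ as an Enriques surface, i.e.\ the quotient of a K3 by a fixed-point-free involution; conclude HP fails because $\cE$ has a connected unramified double cover; and kill maps to abelian varieties using $q=0$ for Enriques surfaces. That agreement is real, and the third part of your argument is exactly the paper's.

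On the HP failure, though, the paper deliberately does not route through Chevalley--Weil (even in its refined form): after noting that Theorem~\ref{T.sc-HP} would suffice, it instead exhibits the two covers $\cF:\ x_0x_1=x_4^2$ and $\cF^-:\ x_0x_1=-x_4^2$ in $\P_4$ (both irreducible of degree $2$) and gives a fully elementary $p$-adic valuation argument: for a point with coprime integer coordinates $(a_0:a_1:a_2:a_3)$, if $v_p(a_0a_1)$ were odd then the $p$-adic orders of the four terms $a_0a_2^4,\ a_1a_3^4,\ a_0^2a_1^3,\ a_0^3a_1^2$ are pairwise incongruent $\bmod\ 4$, hence all distinct, contradicting the equation; therefore $\pm a_0a_1$ is a square and the point lifts to $\cF(\Q)\cup\cF^-(\Q)$. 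This is cleaner and more concrete than invoking CWT plus a descent of the K3 cover to finitely many $\Q$-forms, and it has the bonus of generalizing to any number field (Remark~\ref{R.noHP}(ii)). Your ``spell out the descent'' plan would work but is a genuinely different (and heavier) route.

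The Zariski-density part of your proposal has a real gap. The pencil cut out by $[x_0:x_1]$ does not have genus-$1$ fibers: over $x_1=tx_0$ the residual curve (after removing the line $x_0=0$) is the smooth plane quartic $x_2^4+tx_3^4=(t^3+t^2)x_0^4$, which has genus $3$, not $1$; and the loci $x_0=x_1=0$, $x_0=x_3=0$ you point to are just two lines, nowhere near Zariski-dense. The paper instead observes that $\cE$ receives a dominant rational map \emph{from the Fermat quartic} $F:\ x^4+y^4=z^4+w^4$, namely $x_0=x^4,\ x_1=y^4,\ x_2=xy^2z,\ x_3=x^2yw$ (these four expressions are $\sigma$-invariant, where $\sigma$ is the explicit order-$4$ automorphism used to construct the K3 cover $F'$ and the Enriques quotient), and then density of $\cE(\Q)$ follows from Swinnerton-Dyer's density result for $F(\Q)$, re-proved in the paper for Theorem~\ref{T.Fermat}. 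Without identifying this map (or some replacement, such as one of the genuine elliptic fibrations on the Enriques surface), your density argument does not go through.
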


 This surface is simply connected but not smooth. A smooth model of it  (which is an Enriques surface\footnote{These surfaces were first exhibited  by Enriques: in 1894 he communicated in a letter to Castelnuovo a relevant example of a sextic, which showed that the vanishing of the irregularity and geometric genus were not sufficient for rationality; see \cite{En}. Afterwards, in 1896, Castelnuovo found his celebrated rationality criterion, that the vanishing of the irregularity and the second plurigenus are indeed sufficient. We thank P. Oliverio for these references.} ) is not simply connected, which fits with the viewpoint of this paper.   In \S \ref{SS.K3} we shall add more comments on the nature of this surface (and in Remark \ref{R.noHP}(ii) we shall point out that  the same arguments  work  over any number field.)

\medskip

\underline{\tt New examples of the HP}. Together with this example, we shall prove in \S \ref{SS.K3} the HP for a K3 surface  (which admits the former surface as a 	quotient); namely, the following

\begin{thm}\label{T.Fermat} The surface defined in $\P_3$ by $x^4+y^4=z^4+w^4$ is not (uni)rational (over $\C$) and has the Hilbert Property over $\Q$. 
\end{thm}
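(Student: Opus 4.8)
The plan is to treat the two assertions separately. The non-(uni)rationality is immediate: $S:x^4+y^4=z^4+w^4$ is a smooth quartic in $\P_3$ (its gradient $(x^3,y^3,-z^3,-w^3)$ has no common zero), so $K_S=\O_S$ and $h^1(S,\O_S)=0$, i.e. $S$ is a K3 surface. In particular $h^0(S,\Omega^2_S)=1$; this being a birational invariant of smooth projective surfaces which vanishes for $\P_2$, the surface $S$ is not rational, and since a unirational surface over $\C$ is rational (Castelnuovo), it is not unirational either. (Equivalently, $S$ has Kodaira dimension $0$.)

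For the Hilbert Property over $\Q$ I would slice $S$ by an elliptic fibration. Being a smooth K3, $S$ has trivial topological fundamental group, hence is algebraically simply connected, so by Proposition \ref{P.asc} and Zariski's Purity Theorem every cover $\pi:Y\to S$ of degree $>1$ is ramified along a nonempty divisor $B\subset S$. On the other hand $S$ carries elliptic fibrations defined over $\Q$: for instance the pencil $\phi:S\dashrightarrow\P_1$, $(x:y:z:w)\mapsto(x^2-z^2:w^2-y^2)$, whose general fibre over $(s:t)$ is essentially the $(2,2)$-complete intersection $t(x^2-z^2)=s(w^2-y^2)$, $s(x^2+z^2)=t(w^2+y^2)$, a curve of genus $1$; one needs the arithmetic input (checked on the explicit model) that some such fibration has positive Mordell--Weil rank over $\Q(t)$, so that its fibre $E_t$ has $E_t(\Q)$ infinite for all but finitely many $t\in\P_1(\Q)$.

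Now, given finitely many covers $\pi_i:Y_i\to S$ of degree $>1$ and finitely many proper closed subsets $Z_j\subset S$, choose the fibration $\phi$ so that each branch divisor $B_i$ dominates $\P_1$ under $\phi$, and restrict everything to a general fibre $E_\eta/\Q(t)$. The restriction $\pi_i^{-1}(E_\eta)\to E_\eta$ can then have no component \'etale over $E_\eta$: the Zariski closure in $Y_i$ of such a component would be a surface dominating $\P_1$, hence all of $Y_i$ (which is irreducible), forcing $\pi_i$ to be unramified over $E_\eta$ and contradicting $B_i\cap E_\eta\neq\emptyset$. Hence every geometric component of $\pi_i^{-1}(E_\eta)\to E_\eta$ is a ramified cover of an elliptic curve, so of genus $\ge2$ by Riemann--Hurwitz; the same holds for $\pi_i^{-1}(E_t)$ for all but finitely many $t$, and Faltings' theorem makes $\pi_i(Y_i(\Q))\cap E_t$ finite. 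Thus for $t$ in the set (cofinite in $\P_1(\Q)$, hence infinite) of parameters for which simultaneously $E_t$ is smooth, $E_t(\Q)$ is infinite, $E_t\not\subset Z_j$ for all $j$, and $\pi_i(Y_i(\Q))\cap E_t$ is finite for all $i$, the set $E_t(\Q)\setminus\bigl(\bigcup_i\pi_i(Y_i(\Q))\cup\bigcup_j Z_j(\Q)\bigr)$ is infinite; letting $t$ vary, these points cannot all lie on a fixed curve $W$ (such a $W$ meets each $E_t$ in finitely many points, yet would contain infinitely many of them for all these $t$, which forces $W\supseteq E_t$ for infinitely many $t$, impossible for a curve), so they are Zariski-dense in $S$. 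Therefore $S(\Q)$ is not thin.

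The main obstacle is precisely the choice of an \emph{adapted} fibration. A cover of $S$ whose branch divisor is vertical for $\phi$ (e.g. supported on singular fibres) restricts on general fibres to an \emph{\'etale} cover — an isogeny-torsor — and, exactly as for a single elliptic curve, finitely many such covers may exhaust the rational points of every $E_t$; so $\phi$ cannot be fixed in advance. For the given finite set of branch divisors one must instead select an elliptic fibration $\phi$ of $S$ over $\Q$ whose fibre class $f$ satisfies $f\cdot D>0$ for every irreducible component $D$ of every $B_i$, so that no $B_i$ is vertical: such an $f$ exists because the finitely many conditions $f\cdot D=0$ cut out proper subsets of the isotropic locus in the N\'eron--Severi group of $S$, which the fibre classes span. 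Making this effective over $\Q$ — producing enough elliptic fibrations with sections defined over $\Q$, and checking that one of them has positive rank over $\Q(t)$ — is where the exceptional geometry of the Fermat quartic (maximal Picard number, and a large automorphism group, which over $\Q$ already contains the coordinate permutations and sign changes) is used in an essential way, and this is the technical heart of the proof.
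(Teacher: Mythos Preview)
Your treatment of non-(uni)rationality is correct and is essentially the paper's own one-line argument ($K_S=0$, hence $h^0(\Omega^2_S)=1$, hence not unirational by Castelnuovo).

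For the Hilbert Property, however, there is a genuine gap at precisely the point you flag as ``the technical heart'': you never actually produce the adapted fibration. Given an \emph{arbitrary} finite set of branch divisors $B_i$, you need an elliptic fibration $\phi$ on $S$ which is simultaneously (a) defined over $\Q$, (b) of positive Mordell--Weil rank over $\Q(t)$, and (c) such that $f\cdot D>0$ for every irreducible component $D$ of every $B_i$. Your justification --- that the conditions $f\cdot D=0$ cut out proper subsets of the isotropic cone --- is at best a heuristic over $\C$. It says nothing about whether the $\Q$-rational fibre classes (a set constrained by the Galois action on $\mathrm{NS}(S)$, and far smaller than the full isotropic cone) avoid those hyperplanes, and still less about whether among them there is one satisfying the rank condition (b). Without this the argument collapses: a cover whose branch locus is vertical for your chosen $\phi$ restricts to an \emph{unramified} cover of each smooth fibre, and --- exactly as for a single elliptic curve --- finitely many such isogeny-torsors can absorb \emph{all} of $E_t(\Q)$ for every $t$. (Your ``Zariski-closure'' argument that no generic-fibre component can be \'etale is also phrased imprecisely, though the conclusion there is salvageable via Galois-conjugacy of the geometric components.)

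The paper sidesteps this difficulty by a genuinely different mechanism. It fixes \emph{two} explicit fibrations $\lambda,\mu$ once and for all (pencils of plane cubics through two skew rational lines on $S$), and plays them against each other. Covers whose branch locus is horizontal for $\lambda$ are handled by Faltings on the $\lambda$-fibres $E_l$, much as you suggest. For covers whose branch locus is purely $\lambda$-vertical and which stay generically irreducible over $E_l$, the restriction is an isogeny-torsor; a short group-theoretic lemma (if finitely many cosets of subgroups of a finitely generated abelian group of positive rank cover it up to a finite set, they already cover it completely) then forces \emph{every} point of $E_l(\Q)$ to lift, for all $l$ outside a thin set $T\subset\Q$. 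Now the $\mu$-fibres $E'_m$ are transverse to the $\lambda$-fibres, so these same covers become \emph{ramified} above $E'_m$, and Faltings leaves only finitely many liftable points on $E'_m(\Q)$ --- a contradiction, provided one finds $m$ with infinitely many $p\in E'_m(\Q)$ whose $\lambda$-value avoids $T$. This last step is secured by a separate lemma showing that the four branch values of $\lambda|_{E'_m}$ are non-constant functions of $m$. Thus the paper trades your unproven ``choose an adapted fibration'' step for an intricate but explicit interplay between two concrete fibrations, a coset-covering lemma, and a transversality computation.
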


We do not know of any   previous example in the literature when the HP is proved unconditionally for a non-(uni)rational (even over $\C$) variety.   Recall that non-rational curves never have the HP, so this  attains the minimal possible dimension in such kind of example. 
(A criterion  for producing varieties with the Hilbert Property has been recently proved in \cite{B}; however, in the case of surfaces this would work only for  rational ones over $\C$, so our example  is new also in this respect.)

\medskip

Together with the HP one can formulate (as in \S \ref{S.conj} 
below) a similar property for $S$- {\it integral} points, rather than rational points. One can surely find examples also for this case, and we shall recall some results for tori $\G_m^n$.

\subsection{\tt The Chevalley-Weil Theorem}\label{SS.CW} A fundamental result linked to the HP, but usually presented in separate discussions, is the Chevalley-Weil theorem. Let us briefly recall (the basic case of) this theorem, abbreviated to CWT in the sequel, which can be seen as an arithmetical analogue of the lifting of maps in homotopy theory. (See \cite{B-G}  for extended statements, e.g. Thm. 10.3.11.)

\medskip

\noindent{\bf CWT}: {\it Let $\pi: Y\to X$ be a(n irreducible)  cover which is also an unramified finite morphism of projective varieties over the number field $k$. Then there is a finite extension $k'/k$ such that $X(k)\subset \pi(Y(k'))$.}

\medskip

 Note that this goes in a direction opposite to the HP, because it asserts that rational points {\it can}  all be lifted to a single cover and number field, albeit  possibly  larger than $k$.

 
 We do not pause to recall a proof of this, which depends 
on  Hermite's finiteness theorem for number fields of given discriminant. 
Instead, we now give a (simple) proof of the following  proposition,  that  immediately leads to a modified  version of CWT, which allows to maintain the ground field. 
\footnote{This version is probably    known, though usually not explicitly mentioned.}  

\begin{prop} \label{P.cw} Let $\pi:Y\to X$ be a cover of degree $>1$ and defined over $k$. Let $k'/k$ be a finite extension and let $T\subset Y(k')$ be a set of points over $k'$ such that $\pi(T)\subset X(k)$. Then there exist finitely many covers $\pi_i:Y_i\to X$, each of degree $>1$, defined   and irreducible over $k$, 
such that $\pi(T)\subset \bigcup_i \pi_i(Y_i(k))$.
\end{prop}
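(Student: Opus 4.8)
The plan is to \emph{descend} the cover $\pi$ from $k'$ to $k$ by a Weil restriction of scalars, and then to take the covers $\pi_{i}$ to be the irreducible components, over $k$, of the variety thus produced. The one delicate point is that this Weil restriction is in general reducible over $k$, and one must check that none of its components is birational to $X$; this is exactly where the (geometric) irreducibility of $Y$ is used, and accordingly I shall assume throughout that $Y$ is geometrically irreducible — a harmless restriction, since otherwise $Y(k)=\emptyset$ and $\pi$ is irrelevant to the notion of thinness (for instance the statement would fail for $Y=X\times_{k}k_{0}$ with $k_{0}/k$ a proper finite extension).

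First I would pass to a finite morphism: choose dense open sets $U\subseteq X$ and $V\subseteq Y$ so that $\pi$ restricts to a finite flat morphism $V\to U$ of degree $n:=\deg\pi>1$, with $V$ the full preimage of $U$ in the domain of $\pi$. Then every $y\in T$ with $\pi(y)\in U$ already lies in $V(k')$, while the set of $x\in\pi(T)$ lying outside $U$ is contained in $B(k)$ for the proper closed subset $B:=X\setminus U$; such points are absorbed by finitely many auxiliary covers of degree $2$ over $k$, of the shape $z^{2}=g$ with $g$ a regular function cutting out $B$ on an affine chart of $X$ — exactly the device used in the footnote to the definition of the HP. So it remains to cover the points of $\pi(T)$ landing in $U$.

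Next I would form the Weil restriction $W:=\mathrm{R}_{U_{k'}/U}\bigl(V\times_{U}U_{k'}\bigr)$ of $V$ along the finite étale morphism $U_{k'}:=U\times_{k}k'\to U$; concretely, $W$ is the $k$-variety obtained by writing the coordinates of a prospective $k'$-point of $V$ in a fixed $k$-basis of $k'$ and imposing, over $k$, the conditions $y\in V$ and $\pi(y)=x$. By the defining property of the Weil restriction, for $x\in U(k)$ the fibre of $W(k)$ over $x$ is $\{\,y\in V(k'):\pi(y)=x\,\}$; since by hypothesis $\pi(y)\in X(k)$ for every $y\in T$, this gives $\pi(T)\cap U\subseteq\pi_{W}\bigl(W(k)\bigr)$, where $\pi_{W}:W\to U$ is the structure map. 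Moreover $\pi_{W}$ is finite flat of degree $n^{[k':k]}$, hence $W$ is equidimensional over the integral base $U$ and every irreducible component $W_{\alpha}$ of $W$, taken reduced, dominates $U$; so $W_{\alpha}\to U$ is a cover over $k$ of some degree $d_{\alpha}\ge 1$ with $\sum_{\alpha}d_{\alpha}=n^{[k':k]}$, and after extending each $W_{\alpha}$ to a cover $Y_{\alpha}\to X$ it will only remain to see that every $d_{\alpha}>1$.

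This last step is the crux. It is equivalent to saying that $W\to U$ has no rational section over $k$, i.e. that there is no $k(U)$-algebra embedding of $k(Y)=k(V)$ into the ring $k(U)\otimes_{k}k'$. If there were one, then, $U$ being geometrically integral, tensoring with $\bar k$ (so that $k(U)\otimes_{k}\bar k=\overline{k(U)}$ and $k(Y)\otimes_{k}\bar k=\overline{k(Y)}$ are fields) would yield an $\overline{k(U)}$-algebra embedding
\[
\overline{k(Y)}\ \hookrightarrow\ \overline{k(U)}\otimes_{k}k'\ \cong\ \overline{k(U)}^{\,[k':k]},
\]
and composing with a coordinate projection would give an $\overline{k(U)}$-algebra embedding of the field $\overline{k(Y)}$ into $\overline{k(U)}$ — impossible, since $[\overline{k(Y)}:\overline{k(U)}]=n>1$. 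This is the only place where the geometric irreducibility of $Y$ intervenes. Granting it, the covers $Y_{\alpha}\to X$, of degree $d_{\alpha}>1$ and defined and irreducible over $k$, together with the finitely many auxiliary covers of the second paragraph, cover $\pi(T)$, as required. The other ingredients — making $\pi$ finite and flat on a dense open, the fact that Weil restriction along a finite étale morphism of a finite flat scheme is again finite flat of the expected degree, and the book-keeping for the complement of $U$ — are routine; so I expect the function-field argument just given to be the only substantive point.
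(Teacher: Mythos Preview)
Your approach is essentially the paper's: form the Weil restriction from $k'$ to $k$, pull back over (the diagonal copy of) $X$, and take the $k$-irreducible components as the $Y_i$, then argue that no component can have degree~$1$ since that would yield a rational section of $\pi$ over an extension of $k$. You are more careful than the paper both in passing to a finite flat morphism on a dense open and in making explicit the geometric irreducibility of $Y$ needed for the last step; the paper's one-line justification ``the same would be true for the original cover'' tacitly relies on exactly this hypothesis.
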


In particular, we see that if $X(k)$ can be lifted to finitely many covers over $k'$, then it may be lifted already over $k$ at the cost of  adding finitely many  further covers. 

\medskip

Then, putting together the CWT with this   proposition, we   obtain the following  variant (a refinement for what concerns the field of definition):

\medskip

\noindent{\bf Alternative  CWT}: {\it Let $\pi: Y\to X$ be a(n irreducible)  cover of degree $>1$ which is also an unramified finite morphism of projective varieties over the number field $k$. Then there exist finitely many covers $\pi_i:Y_i\to X$  of degree $>1$  such that $X(k)\subset \bigcup_i\pi_i(Y_i(k))$.}

\medskip

Now, note that the number  field $k$ does not increase to $k'$, so   this  expresses something  that is literally  {\it opposite} to the HP;  therefore the HP  is violated for the varieties in question, i.e. those which admit an unramified cover as previously defined. See Theorem \ref{T.sc-HP} below for an explicit statement. 
  
  This fact also gives rise to the new example of violation of the HP alluded to above. 

\begin{proof}[Proof of Proposition]  The proof  is simple. 
We let $\widetilde Y$ be the restriction of scalars from $k'/k$ (see \cite{SeTGT}).  If $d=[k':k]$, this is a variety of dimension $d\cdot \dim Y$, isomorphic over $k'$ to $Y^d$. 

The morphism $\pi$ yields a morphism,   denoted in the same way, $\pi: \widetilde Y\to\widetilde X$.  Now, $X$ embeds diagonally as a variety $\Delta\cong X$  into $\widetilde X$ and similarly for $X(k)\subset \widetilde X(k)$. We know that the points in $\pi(T)\subset X(k)$ lift to $T\subset Y(k')$ and hence to $\widetilde Y(k)$. Now it suffices to define  the $Y_i$ as the irreducible (over $k$) components of $\pi^{-1}(\Delta)$. 

Note that  no such component can have degree $1$, for otherwise it would have a rational section (over some finite extension of $k$), and the same would be true for the original cover. 

This concludes the proof.
\end{proof} 

We remark that, if the original cover is Galois, the covers so obtained  are all  isomorphic to $Y$ over $k'$.

The  variant  of the CWT just stated  shows that to violate the HP for $X/k$  it is sufficient to have a finite unramified cover of $X$, i.e. that {\it $X$ is not algebraically simply connected}.  

In practice,  we assert that:

\begin{thm}\label{T.sc-HP} 
Let $X/k$ be a projective algebraic variety with the Hilbert Property. Then  $X$ is algebraically simply connected. Also, the fundamental group of every  normal projective model  of $X$ admits no subgroup of finite index $>1$.
\end{thm}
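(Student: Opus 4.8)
The plan is to contradict the assumption that $X$ --- equivalently, its normalization --- carries an unramified cover of degree $>1$, by playing the Alternative Chevalley--Weil Theorem against the birational invariance of the HP.

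First I would reduce everything to normal projective models. Let $Z/k$ be any normal projective variety birational to $X$ over $k$; since the HP is invariant under birational isomorphism over $k$, the variety $Z$ also has the HP. In particular this applies to $Z=X'$, the normalization of $X$ (which is projective, being finite over $X$). Hence it suffices to prove the following statement: \emph{a normal projective variety $Z/k$ with the HP admits no unramified cover of degree $>1$.} Indeed, applied to $Z=X'$ this is exactly the assertion that $X$ is algebraically simply connected; and applied to an arbitrary normal projective model $Z$, it gives, via Proposition~\ref{P.asc} (whose normality hypothesis is now satisfied), that $\pi_1(Z)$ has no subgroup of finite index $>1$.

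Now suppose, for contradiction, that the normal projective variety $Z/k$ has the HP and admits an unramified cover $\pi:W\to Z$ of degree $>1$. Replacing $W$ by its normalization $W'$, we obtain an integral projective variety (finite over the normal variety $Z$) together with a genuine \emph{finite morphism} $\pi':W'\to Z$; since the ramification of a cover is by definition computed on the normal models and $Z$ is its own normalization, $\pi'$ is again unramified and of degree $>1$. The Alternative CWT then applies to $\pi':W'\to Z$ and produces finitely many covers $\pi_i:Y_i\to Z$, each of degree $>1$ and defined over $k$, with $Z(k)\subset\bigcup_i\pi_i(Y_i(k))$. Consequently $Z(k)\setminus\bigcup_i\pi_i(Y_i(k))=\emptyset$, which is not Zariski-dense in the (nonempty) variety $Z$, contradicting the HP for $Z$. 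This proves the claim, and with it the theorem.

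The argument is essentially bookkeeping with the tools already assembled, so I do not expect a genuinely hard step. The points that need a little care are (i) checking that, after passing to the normalization $W'$, all the hypotheses of the Alternative CWT are met --- projectivity, integrality, and the cover being an honest unramified finite \emph{morphism} rather than a mere dominant rational map; and (ii) keeping track of which birational model the HP is applied to, so that Proposition~\ref{P.asc} is invoked only on a variety that is genuinely normal. If anything, the most delicate point is the passage from the paper's loose notion of ``cover'' (a dominant rational map of finite degree) to the finite morphism of normal projective varieties to which the Chevalley--Weil machinery literally applies; everything else is formal.
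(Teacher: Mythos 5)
Your overall strategy matches the paper's: reduce to a normal projective model via birational invariance of the HP, apply the Alternative Chevalley--Weil Theorem to a hypothetical unramified cover of degree $>1$, and conclude that all rational points lift, contradicting the HP; the second claim is then Proposition~\ref{P.asc}. However, there is one genuine gap you do not address: the field of definition of the unramified cover. The definition of ``algebraically simply connected'' (and the equivalence with $\pi_1$) concerns covers over $\overline{k}$, not merely covers defined over $k$. So when you suppose that $Z$ is not algebraically simply connected and ``admits an unramified cover $\pi\colon W\to Z$ of degree $>1$'', that cover (and its normalization $W'$) is a priori only defined over some finite extension $k'$ of $k$. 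In that case the Alternative CWT, and your Proposition~\ref{P.cw}, only give you finitely many covers over $k'$ with $Z(k')\subset\bigcup_i\pi_i(Y_i(k'))$, which contradicts the HP for $Z/k'$, not for $Z/k$.

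The missing ingredient is the observation the paper uses at precisely this point: the HP is preserved under finite extensions of the ground field (Serre, \emph{Topics in Galois Theory}, Prop.~3.2.1), so $Z/k$ having the HP forces $Z/k'$ to have it as well, and the contradiction over $k'$ suffices. Once you insert this step --- replace $k$ by the finite extension $k'$ over which the unramified cover is defined, then run your argument over $k'$ --- the proof is complete and coincides with the paper's. Everything else you wrote (passing to the normalization, verifying that the cover becomes a finite unramified morphism of projective varieties, and routing the $\pi_1$ statement through Proposition~\ref{P.asc}) is correct and in line with the paper.
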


\begin{proof}
We have already remarked that the HP is a birational invariant. So we may suppose that $X$ is projective, normal,  non algebraically simply-connected and prove that the HP fails. Take a finite unramified cover $Y\to X$ of degree $>1$, possibly defined over a finite extension $k'$ of $k$. Since finite extension of the ground field preserves the HP, as  in \cite{SeTGT}, Prop. 3.2.1., the failure of HP over $k'$ implies the failure of HP over $k$.  
Now, by our refined version of CWT, we obtain finitely many  covers $\pi_i: Z_i\to X$ over $k'$  such that $X(k')=\cup_i \pi_i(Z_i(k'))$, so the HP does not hold for $X/k'$, as wanted.
We have then proved the first part of the theorem.

Taking this into account, the second part is already in part (i)  of  Proposition \ref{P.asc} above.

\end{proof} 

This  shows a  basic feature which forbids the HP, and links it with a purely topological property; also,  this  immediately explains the above mentioned examples  related  to abelian varieties. For instance, one also obtains Corollary 4.7 of \cite{B} where it is proved that the only algebraic groups which are of Hilbert type are the linear ones.
But Theorem \ref{T.sc-HP} leads also to new examples of failure of HP (still with a Zariski-dense set of rational points), like  the mentioned one  of Theorem \ref{T.Enriques}:  the projective surface appearing in the Theorem {\it is} simply-connected, but its smooth models are not.  It is also tempting to formulate a 
kind of converse of the above assertion, as we shall do in \S \ref{S.conj} below.

Actually, we shall formulate  more general conjectural statements (also for $S$-integral points). These statements lead to the study of what happens on  considering only ramified covers,  omitting unramified ones: such  omission is not a matter of taste, but has  the good reason that, as we have just observed, the alternative  CWT predicts lifting of all rational points in  all the unramified  cases.  

This motivates the definition of a new property,   including in a sense the HP, and seemingly  with much  the same applications as  the HP. 
See \S \ref{S.conj} for further comments, especially  \S \ref{SS.modified}  for an explicit statement, and  see the papers \cite{C}, \cite{F-Z}, \cite{Z} for some results regarding implicitly  this different property.

\subsection{\tt Weak approximation and the Hilbert Property}  Below we let   $k_v$ denote the completion of the number field $k$   with respect to a place $v$. 

We start by recalling the definition of  the {\it weak-weak approximation property}, denoted WWAP in the sequel:

\medskip

\noindent{\bf WWAP}: {\it A variety $X/k$ has the WWAP if there exists a finite set $S_0$ of places 
of $k$ such that, for every finite set $S$ of places of $k$, $S$ disjoint from $S_0$, the diagonal embedding $X(k)\hookrightarrow \prod_{v\in S} X(k_v)$ is dense.}

\medskip

In other words, there are so many rational points on $X$ to  approximate simultaneously  within $S$ to any finite set of $k_v$-rational points. There is an analogous notion for integral points.  Also, we speak of  a {\it weak approximation property}  WAP   if $S_0$ may be taken to be empty.\footnote{There are also concepts of {\it strong} a. p.  - different from WAP only for integral points -  and of {\it hyper-weak} a. p., this last one being introduced by Harari, see \cite{Ha3}.} 

\medskip

It is not difficult to see that  for smooth   varieties   (not necessarily complete)  these  properties are invariant by birational isomorphism over $k$, as happens for  the HP.

Actually,   a precise link with the HP was established by Colliot-Th\'el\`ene and Ekedahl, who  observed  that  {\it the WWAP implies  the HP}:  see \cite{SeTGT}. (In fact, those  arguments   show a bit more, i.e.  that it is sufficient that `most' local points can be approximated.) 

It shall appear as a consequence of conjectures below  that {\it $k$-unirational varieties have the HP}.  Through  the result just mentioned, this would follow also from a conjecture of Colliot-Th\'el\`ene asserting that {\it a unirational variety has the  WWAP}.

The implication established by Colliot-Th\'el\`ene and Ekedahl, combined with Theorem \ref{T.sc-HP}, results in particular in the following further  implication:

\medskip

\centerline{\it If the projective variety $X/k$ has the WWAP then $X$ is algebraically simply connected.}

\medskip

This conclusion  appears (in slightly different form) already in the 1989 paper \cite{M} by Minchev, where it is proved independently of the above circle of observations (but at bottom with similar arguments). See also Harari's paper \cite{Ha2} for a survey discussion presenting also several examples related to (weak) approximation properties. The paper \cite{Ha} by Harari also discusses  interesting relations between obstructions to weak approximation and the fundamental group; these go in a direction similar to the present paper, even if  the HP is not directly considered.

These results represent   instances, different from e.g. the well-known ones involving zeta functions,  of how an arithmetical aspect may happen to reveal and actually prove a topological one. 

\medskip

Concerning  mutual implications among these properties, apart from the mentioned ones, we shall discuss in \S \ref{SS.HW}  below  a cubic surface, unirational but not rational over $\Q$,  considered already by Swinnerton-Dyer in \cite{SwD2}. He proved that it has  not the WAP; we shall reproduce his argument and slightly expand it for a similar surface.  We shall also prove the HP for such a surface, so  we can conclude that

\begin{thm}\label{T.HP-WAP}  The Hilbert Property for a surface over $\Q$ does not generally imply the Weak Approximation Property for its smooth part.
\end{thm}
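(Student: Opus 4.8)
The plan is to exhibit a single smooth cubic surface $X\subset\P_3$ over $\Q$ --- the one considered by Swinnerton-Dyer in \cite{SwD2}, together with a mild generalisation of it --- having (a) a Zariski-dense set of rational points, (b) a smooth part that fails the Weak Approximation Property, and (c) the Hilbert Property over $\Q$. Since the WAP is a birational invariant of smooth varieties and $\P_2$ has it, (b) will show in particular that $X$ is not rational over $\Q$ (which is what makes the example non-trivial), while, $X$ having a rational point, Segre's theorem makes it unirational over $\Q$, so that $X(\Q)$ is Zariski-dense and (a) holds.

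First I would reproduce Swinnerton-Dyer's argument for (b). For his surface the $27$ lines are defined over a number field with a Galois action of a very special (small) type, and one produces a place $v_0$ of $\Q$ and a congruence condition of cubic-residue character --- in present-day language the evaluation of an order-$3$ class of $\mathrm{Br}(X)$ --- which is satisfied by every point of $X(\Q)$ but fails at some point of $X(\Q_{v_0})$. Hence the diagonal image of $X(\Q)$ omits a nonempty open subset of $X(\Q_{v_0})$, so the smooth locus of $X$ has not the WAP. The same elementary manipulations then go through, with the same conclusion, for a slightly wider family of cubic surfaces; this flexibility is what is needed in order to arrange (c).

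The core of the argument is (c). One must show that the local obstruction found in the previous step is the \emph{only} obstruction to weak approximation on $X$, and that it is concentrated at a finite set $S_0$ of places: away from $S_0$ one checks --- exploiting the very special structure of this surface, e.g.\ the small Galois image on its lines, which keeps the relevant descent and Brauer-group computations entirely explicit --- that the closure of $X(\Q)$ in $\prod_{v\in S}X(\Q_v)$ is everything. Equivalently, $X$ satisfies the Weak-Weak Approximation Property with bad set $S_0$, and the Hilbert Property then follows from the implication that the WWAP implies the HP, due to Colliot-Th\'el\`ene and Ekedahl and recalled above. (One may instead invoke the refinement of that implication, which only requires that at each place the closure of $X(\Q)$ in $X(\Q_v)$ be ``large'': outside $S_0$ it is all of $X(\Q_v)$ by Hensel's lemma together with a $\Q$-point in each $v$-adic component, while at the finitely many places of $S_0$ the congruence above deletes only part of $X(\Q_v)$, still leaving a nonempty open subset.) The main obstacle is exactly this last step: for a general cubic surface the statement that the Brauer--Manin obstruction is the only obstruction to weak approximation is merely conjectural, and it is precisely to remain within a range where it can be proved unconditionally that one works with Swinnerton-Dyer's example and its mild generalisation. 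Granting this, (b) and (c) together give the theorem.
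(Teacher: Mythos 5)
Your overall plan (exhibit Swinnerton-Dyer's cubic surface from \cite{SwD2}, verify the failure of weak approximation at some place, prove the Hilbert Property over $\Q$) matches the paper's choice of example, and the conclusion is the same. But the route you take to the HP and your description of the weak-approximation obstruction both diverge from the paper, and the second point contains a factual error worth correcting.

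On the obstruction: for the surface $t(x^2+y^2)=(4z-7t)(z^2-2t^2)$ the failure of WAP found by Swinnerton-Dyer and reproduced in the paper is at the \emph{real} place: the real locus has two connected components and the component $|z/t|\le\sqrt{2}$ contains no rational points at all. There is no cubic-residue condition and no order-$3$ Brauer class involved; the equation presents $X$ as a (cubic) Ch\^atelet surface --- a conic bundle over $\P_1$ with the conics of the form $x^2+y^2=c$ --- and the relevant Brauer classes for such surfaces are of order $2$. In the paper's variant surface $t(x^2+y^2)=(2z-7t)(z^2-2t^2)$, where rational points are dense in the reals, the obstruction is instead a $2$-adic congruence argument, again nothing of order $3$. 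You do not actually need the ``slightly wider family'' you invoke: the single Swinnerton-Dyer surface already suffices for both halves of the theorem.

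On the Hilbert Property: you reduce it to the Weak-Weak Approximation Property plus the Colliot-Th\'el\`ene--Ekedahl implication WWAP $\Rightarrow$ HP. That route is in fact viable --- WWAP for (cubic) Ch\^atelet surfaces is a theorem of Colliot-Th\'el\`ene, Sansuc and Swinnerton-Dyer (\cite{CT-S-SD}), and the paper itself observes that this yields an alternative proof of the HP, and sketches a weaker-form WWAP argument in Appendix~2. What the paper's main proof does instead is entirely direct and self-contained: using the fibration of $X$ by conics $C_\lambda$ and by elliptic curves $E_\mu$, a base point, Silverman's specialization theorem to get infinitely many $E_\mu$ of positive rank, and a branch-locus/thin-set analysis plus Faltings on pullbacks, exactly as in the proof of Theorem~\ref{T.Fermat} for the Fermat quartic. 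The direct proof keeps the HP argument elementary relative to the (substantial) descent and Brauer computations that a full WWAP proof requires, and avoids delegating the key step to \cite{CT-S-SD}; your route buys brevity and reuse of a strong existing theorem, at the cost of needing to either prove WWAP from scratch (which your sketch leaves entirely open, saying only that one ``exploits the very special structure'') or cite it.
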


 \noindent (Note that the analogous statement  for curves would not be true, since the only curves with the HP are rational and thus their smooth part has  the WAP.)  However we do not know whether the HP generally  implies at least  the WWAP; we tend to believe this should not be the case, but for this we have neither a proof nor real evidence. 

The Appendix 2 below shall discuss  the WWAP for the above cubic surface, essentially proving (a slightly different form of) it: see Theorem \ref{T.A}.\footnote{Actually, an independent and more general proof of the WWAP for this surface appears already in the paper \cite{CT-S-SD}; we thank Colliot-Th\'el\`ene for this (and related) reference(s).} 
 (This is  also sufficient to yield a different proof of the HP for that surface.) 
 
 \smallskip

Other examples in this direction are possible for integral points. We just pause for a few words on the celebrated  {\it Markoff-surface}  $x^2+y^2+z^2=3xyz$, in which the nonzero  integral points are Zariski-dense and all in a single orbit of $(1,1,1)$ by the  (discrete) group of automorphisms, generated by $(x,y,z)\mapsto (x,y,3xy-z)$ and the permutations of coordinates. The problem of WAP for integral points on  this surface is open, but Bourgain, Gamburd and Sarnak \cite{BGS} proved much in this direction, showing in particular  that for `almost all' primes (in a strong sense) we may approximate by integral points  any nonzero point over $\F_p$.   (Their methods  should be amply sufficient for instance  to derive the HP for integral points of this surface.)  

\medskip

In the next section we shall raise some questions and conjectures originated from the links that we have alluded to. In \S \ref{S.Ex} we shall give detailed proofs of the mentioned results, which concern  surfaces of K3 and Enriques type. In the Appendices we shall sketch proofs of other assertions, for Kummer surfaces and relations with approximation properties. In view  of the Bombieri-Lang conjecture  that surfaces of general type should never have  a Zariski-dense set of rational points, all of this basically exhausts the study of the HP for surfaces with a Zariski-dense set of rational points.

\section{Some questions, conjectures and variations}\label{S.conj}

In this section, in the direction of what we raised before,  we put forward some questions and  tentative conjectures, and also state some further results. We shall consider also the case of integral points on affine varieties. 

We start with   questions.

\subsection{Questions and conjectures} \label{SS.qc}  We have pointed out that a projective variety non algebraically simply connected  cannot have the Hilbert Property and we start by asking a converse of this:

\medskip

\noindent {\bf Question-Conjecture 1}: {\it Any  smooth algebraically simply connected  projective variety (over  $k$) with a Zariski-dense set of  rational points has the Hilbert Property (over $k$)}.
  
\medskip

Note that the smoothness assumption cannot be omitted, a relevant example coming  (once more)  from the surface in  Theorem \ref{T.Enriques}.

A proof of  this statement, if at all true, would be most probably very difficult.  

Since the HP is a birational invariant,  any projective variety with a smooth model which is simply connected  should have the HP  as soon as its rational points are dense. 
This  would include for instance unirational varieties (over $k$) since after a result of Serre \cite{Se} any smooth model of a unirational variety is   simply connected and of course their $k$-rational points are Zariski-dense.
 By a more general result   - see \cite{D}, Corollary 4.18  - this would also imply that a   rationally connected variety over $k$ has the HP \footnote{This case is also linked with other questions concerning the Brauer Manin obstruction: it is not known whether for rationally connected varieties this is the only obstruction to weak approximation: if this   was the case, then HP for smooth rationally connected varieties would hold; we than Borovoi for explaining us this link}. 
 
  We also remark that in turn this would imply a positive answer to  the Inverse Galois problem. Indeed,  as is well known (see \cite{SeTGT} and \cite{V}), there is a strong  link between the HP and the Inverse Galois Problem, provided by the method introduced by E. Noether of taking a quotient e.g. of $\P_n$ by the action of  a linear representation of a given finite group $G$ (for instance by permutation of coordinates), obtaining a unirational variety. 

\medskip

However, this statement  looks very optimistic (only little  evidence is provided by Theorem \ref{T.Fermat}), and as in some conjectures of Vojta  it is perhaps necessary to weaken the conclusion, and  to allow (at least) an enlargement of the number field in order to ensure  the Hilbert Property. 

To appreciate the strength of the statement, we note  that  the above version  would have striking (unknown) consequences as the following one:

 {\it Let $A$ be an abelian variety over $\Q$ with a Zariski dense set of rational points. Then there exists a non-rational  point $p\in A$, defined over some  quadratic field such that $-p$ equals the quadratic conjugate of $p$.} 

This is obtained on applying the statement of Question-Conjecture 1 to the quotient $X:=A/{\pm 1}$, which may be checked to be algebraically simply connected (it is $\P_1$ when $A$ is an elliptic curve). Since $A$ is a double cover of $X$, the statement predicts rational points on $X$ which do not come from $A(\Q)$, and this translates in the existence of $p$. Actually, one would get a Zariski-dense set of such points.

Already the cases when $A$ is a product of several elliptic curves  lead to arithmetical questions such as: 

{\it Let $f_1,\ldots,f_r\in\Q[x]$ be $r\geq 2$ cubic polynomials such that all curves $y^2=f_i(x)$ are elliptic with infinitely many rational points. Is it true that necessarily there are rational numbers $u_1,\ldots,u_r$ such that  no $f_i(u_i)$ is a square but all products $f_i(u_i)f_j(u_j)$ are squares in $\Q^*$?}

 A positive answer would follow from the above.

We do not know how to treat this problem in general, but in Appendix 1 below we shall give a positive answer for the cases $\dim A=2$. In these cases the surfaces $X$ which appear  are  called {\it Kummer surfaces}, a classical object of study.

\medskip

Anyway, apart for these instances,  we have no real evidence even for  the validity of these hypothetical statements (not to mention the full conjecture). On the other hand, at least the  said implication  for the quotients $A/\pm 1$ may be obtained  on enlarging the ground field, for which  we sketch a possible argument: on taking a dominant map, say of degree $d$, from $X$ to $\P_n$ ($n=\dim X$), just by Hilbert's theorem we may find rational points on $\P_n$ which lift to points  $p$ of degree   degree $2d$ on $A$, and degree $d$ on $X$. If $k$ is the field generated by the image  of $p$ in $X$, then $p$ is defined over a quadratic extension $k'$ of $k$, but not over  $k$. By taking multiples of $p$,  in general we  obtain a Zariski-dense set of such points on $A$ (with the same $k,k'$), which give what is needed. (These multiples shall be Zariski-dense for a suitable choice of  the starting rational point on $\P_n$.)

\medskip


In any case, forgetting the hypothesis on simply connectedness, we can also extend  the statement to all varieties, asking for instance the following variant.

\medskip

\noindent {\bf Question-Conjecture 2}: {\it Let $X/k$ be  a  variety with a Zariski-dense set of rational points.   If $\pi_i:Y_i\to X$, $i=1,\ldots ,m$, are finitely many covers of degree $>1$,  each with non-empty  ramification, then  $X(k)-\bigcup_{i=1}^m\pi_i(Y_i(k))$ is still Zariski-dense in $X$.}  




\medskip

Again, a weaker and more plausible version is obtained  by allowing a number field larger  than $k$ in the conclusion, and/or also by adding   more stringent requirements on the branch locus, or else the conclusion could maybe hold generally only after  bounding  the dimension. 

In the case of curves, Question-Conjecture 2 is settled: the crucial cases arise from curves of genus zero (solved by  Hilbert Irreducibility Theorem) and curves of genus one (which follows from   Faltings' Theorem\footnote{Historically, the case of curves of genus one was  settled previously by N\'eron, \cite{Ne}, via a method based on heights estimates on abelian varieties, subsequently much improved  by Mumford; still another proof for this case can be deduced from the results in \cite{Z}.}). 

Apart from the above remarks, some evidence for these statements comes from the celebrated conjectures of Vojta, which, very roughly speaking, predict that (possibly at the cost of a finite extension of the ground field) the distribution of rational points is governed by the `bigness' of the canonical class: {\it the bigger $K_X$ is, the smaller chance we have to find many rational points on $X$}. (See \cite{B-G}, Ch. 14,  for a discussion of several  of these conjectures.) 
 
 Now, if $X$ is (smooth and) simply connected, any cover $\pi:Y\to X$ of degree $>1$ has ramification, and the ramification divisor $R$ (considered now with appropriate multiplicities, see \cite{B-G}, p. 473) contributes to $K_Y$, for we have $K_Y=\pi^*(K_X)+R$. Hence, if $R\neq 0$, and especially of $R$ is big,  we expect to find less rational points on $Y$ than those which would be accounted if most rational points on $X$ would lift.
 
 Actually, this principle is made much  more explicit in Manin's conjectures on the distribution of rational points on Fano varieties, i.e. those with $-K_X$ ample. After some variants of Manin's conjecture, one could expect to find `more' points on $X(\Q)$ then those coming from $Y(\Q)$ (or from intermediate covers).

Perhaps a version of Manin's conjecture for {\it singular} Fano varieties, as formulated by Batyrev, Manin and Tschinkel (see e.g. \cite{BM}, \cite{BT}) might provide  the HP for suitable quotients of projective spaces,  so   implying a positive solution to the Inverse Galois Problem by the  approach of Noether mentioned above. 
Some steps in this direction have been done by T. Yasuda in \cite{Y1}, \cite{Y2}.

\subsection{\tt A modified HP} \label{SS.modified} The property implicit in  the {\it Question-Conjecture 2}   is a modified natural form of the HP, which we shall call {\it Weak Hilbert Property} abbreviated WHP, i.e.:

\medskip

\noindent{\tt Definition}. We say that a normal variety $X/k$ has the {\it Weak Hilbert Property}  WHP if, given finitely many covers  $\pi_i:Y_i\to X$, $i=1,\ldots ,m$, each  ramified above a non-empty divisor,  $X(k)-\bigcup_{i=1}^m\pi_i(Y_i(k))$ is Zariski-dense in $X$.

\medskip

Namely, it is the same as the HP,  except that we consider only covers ramified (above a divisor). And hence, in particular,  for algebraically simply connected varieties it coincides with the HP. 

In fact, we have seen that there is a very good reason to {\it disregard  unramified covers},  because the (varied) CWT shows that as soon as we have such a cover  then all rational points can be lifted to finitely many covers.  

As expressed by the above  Question-Conjecture 2, this new property WHP, differently from the HP,  could possibly hold for {\it all varieties with a Zariski-dense set of rational points}. 

For instance, it holds for all  curves with infinitely many rational points ({\it a posteriori} of genus $\le 1$), since   it holds for rational curves and since a ramified cover of a curve of genus $1$ has genus $>1$ and one can apply Faltings' theorem (or even a previous  estimate on heights by Mumford).    For surfaces, the results of this paper combined with the results of \cite{Z} for  covers of some abelian surfaces and with the Bombieri-Lang  conjecture for surfaces of general type, also provide substantial evidence of a general validity. 

Regarding abelian varieties of any dimension $>1$, unfortunately there are no general results for rational points on {\it covers} of them as strong as the results for {\it subvarieties}   (although the mentioned Vojta's Conjecture predicts degeneracy of rational points on each ramified cover of an abelian variety).  The issue  of covers  is treated in \cite{Z} with different methods compared to the use of  Faltings' or similar  theorems. This is sufficient to prove the WHP   for products of elliptic curves with the (unessential)  restriction that we deal with a Zariski-dense {\it cyclic}  group of rational points.  Those  methods in principle should extend to any abelian variety, provided certain results on the Galois action on torsion points are available.\footnote{This happens in considerable generality, as shown e.g. by Serre, after Faltings' proof of certain Tate conjectures.}

\subsection{\tt Integral points}   One can formulate in a rather obvious way analogous statements for $S$-integral points, in which the variety $X$ can  be non complete, and the ($S$-) integral points are meant e.g. with respect to a compactification.\footnote{They may be defined as the points which do not reduce to the divisor at infinity for any prime outside $S$.}  In this case the analogue of the HP is that {\it not all $S$-integral points on $X$ can be lifted to rational points of finitely many covers of degree $>1$}.\footnote{It is immaterial that we require that the points are integral also in the covers, since one can change anyway slightly a cover to get a finite map and ensure integrality of rational points sent to integral ones.}   

Now, there is  a version of the Chevalley-Weil Theorem  true for $S$-integral points. We state it, denoting by $\O_S$ the ring of $S$-integers of $k$ and by $X(\O_S)$ set of $S$-integral points of the affine variety $X$:

\medskip

\noindent{\bf CWT for integral points}: {\it Let $\pi: Y\to X$ be a(n irreducible)  cover which is also an unramified finite morphism of affine  varieties over the number field $k$. Then there is a finite extension $k'/k$ such that $X(\O_S)\subset \pi(Y(k'))$.}

\medskip

 Much the same considerations as above apply for this case, and one is led to a WHP on considering only covers with a divisor of ramification not contained in the divisor at infinity. 

A general version of Question-Conjecture 1 could be

\medskip

{\bf Question-Conjecture 1bis}. {\it Let $X/k$ be a smooth affine  variety topologically simply connected  and such that the set of $S$-integral points are Zariski-dense. Let $\pi_i: Y_i\to X$ be morphisms of finite degree from algebraic varieties $Y_1,\ldots,Y_n$. Then $X(\O_S)-\bigcup_{i=1}^{n}\pi_i(Y(k))$ is Zariski-dense.}


\medskip

We can naturally formulate also an analogue  of Question-Conjecture 2. For brevity we do not repeat this, 
and only recall that  some results in this direction are proved in \cite{C}, \cite{F-Z}, \cite{Z} for multiplicative tori $X=\G_m^n$.
In this toric case   the $S$-integral points are those with $S$-unit coordinates, and they form a finitely generated group. In the quoted papers  it is shown in particular that a Zariski-dense subgroup of integral points cannot be lifted to rational points  of a finite number of ramified covers, confirming the statement of Question-Conjecture  2 (i.e. the WHP) for these cases.   

\subsection{\tt The Hilbert Property and Nevanlinna Theory}  We conclude this introductory part by mentioning that these topics admit natural analogues in the context of Nevanlinna Theory (where deep links  with Diophantine Geometry have been recognised since long ago, especially by Vojta, see \cite{Voj} and \cite{B-G}). For instance, the analogue of Chevalley-Weil is well known: since $\C$ is simply connected, given an unramified cover of algebraic varieties $Y\to X$ over $\C$, every holomorphic map $\C \to\ X$ lifts to $Y$. 

Concerning  the HP, an analogue of Question-Conjecture 1 could be   as follows:

{\it  Let $X$ be a simply connected smooth  projective algebraic variety over $\C$ with an analytic map $f:\C\to X$ with Zariski-dense image.  Then for every cover $Y\to X$ (in the present sense) of degree $>1$ there is an analytic map $g:\C\to X$ that does not lift to $Y$.}

Similarly, there are analogies also with other issues raised in this paper.

\section{Proof of main assertions}\label{S.Ex}

\subsection{A non-rational surface with the Hilbert Property and the proof of Theorem \ref{T.Fermat}}\label{SS.K3}

In this subsection we provide an example of a non-rational surface with the Hilbert Property, proving in particular Theorem \ref{T.Fermat}. 

It seems that such examples, where one  can actually prove the HP and  not merely fit it into conjectures,  are not so common: on the one hand we know from the above that we must start with a simply connected variety, on the other hand we have to provide `many' rational points.\footnote{It is worth noticing that, somewhat in the converse direction,  no example seems to be  known of a  smooth simply connected  projective surface where the rational points can be proved to be not Zariski-dense; this  should be expected e.g. for smooth hypersurfaces in $\P_3$ of large enough degree (which are of general type). Instead, for integral points on affine surfaces such an example may be found in the paper \cite{CZ2} by the authors: see Thm. 3 and Cor. 2 therein.}

\medskip

Our example is furnished by the Fermat quartic smooth surface (K3) defined in $\P_3$ by 
\begin{equation}\label{Fermat}
F:\quad x^4+y^4=z^4+w^4.
\end{equation}

It is well known that this is not (uni)rational (because e.g. the canonical  class vanishes, hence it has global sections) and that it  is simply connected, because e.g. it is a smooth complete intersection of dimension $>1$. 

Also, it has a Zariski-dense set of rational points; this is due to   Swinnerton-Dyer  \cite{SwD}.\footnote{He actually proved the density of rational points in the euclidean topology, inside the set of real points}  (Being simply connected and with a Zariski-dense set of rational points, the HP  is actually predicted by the conjectures above.)

We reproduce a proof of this density here, borrowing from  Swinnerton-Dyer's  construction, for completeness and usefulness for  the sequel. 

\medskip

This surface $F$ contains forty-eight  lines, of which eight are defined over $\Q$, and  given 
by  $x=\pm z, y=\pm w$, or $x=\pm w, y=\pm z$. 
A first idea is, for one of these lines, to consider the pencil of planes through this line and intersect it with $F$; this shall produce a curve which is a union of the line with a cubic in the plane, which turns out to be generically smooth. Now, any other of the lines which does not intersect the former one shall intersect the plane in one point  necessarily on the cubic. This yields a section of the pencil of cubics (i.e. a rational map from any point of $\P_1$ to the corresponding cubic); in turn, choosing this point as an origin on the cubic, shall yield an elliptic curve. By changing this last line we shall obtain another section and thus a point on the elliptic curve, distinct from the origin. \footnote{Here we could also use the `tangent  process', i.e. intersecting the cubic with the tangent at the previous point;  or else we could  intersect the cubic with the line. However this last method, though sufficient for our purposes,  would produce rational points only for a `thin' subset of the pencil.}   It turns out that this point is not (identically) torsion. 

Now, restricting the pencil to rational points on $\P_1$, we obtain a family of elliptic curves defined over $\Q$, with a rational point which can be torsion at most for finitely many values (by a theorem of Silverman or a direct argument). This provides a Zariski-dense set of rational points.

\medskip

To be explicit, say that we start with the line $L:x=z,y=w$, with corresponding pencil of planes defined by $\Pi_\lambda: w-y=\lambda(x-z)$, $\lambda\in\P_1$.   We find for the cubic the (further) equation
\begin{equation*}
x^3+x^2z+xz^2+z^3=4\lambda y^3+6\lambda^2y^2(x-z)+4\lambda^3y(x-z)^2+\lambda^4(x-z)^3.
\end{equation*}

Call $E_\lambda$ this cubic (understood on the said plane). It is singular only  if $\lambda(\lambda^8-1)=0$ (or $\lambda=\infty$), and we shall tacitly disregard  these points.

  Note that the $E_\lambda\cup L$  are the fibers for the rational map $\lambda:F\dashrightarrow \P_1$, given by $\lambda:={w-y\over x-z}$. Actually, this map is well defined on $F-L$ \footnote{In fact, if we define $\lambda$ by the same formula on the whole $\P_3$, then it is not defined at any point of $L$. However, its restriction to $F$ can be continued to a regular map on the whole surface.} with fibers $E_\lambda -L=E_\lambda -\{(\eta:1:\eta:1):\eta^3=\lambda\}$, and of course restricts to a regular map on the whole $E_l$, provided $l$ is such that  this last curve is smooth. 

\medskip

 If we intersect  the plane $\Pi_\lambda$ with the different  line $L': x=-z,y=-w$, we get the point  $(1:-\lambda:-1:\lambda)\in E_\lambda$. As above, we equip $E_\lambda$ with a structure of elliptic curve by prescribing this point as the origin. 

Intersecting now the same plane with the line $x=w, y=-z$, we get the new point $(\lambda +1:1-\lambda:\lambda-1:\lambda+1)\in E_\lambda$.

We omit here the verification that the difference of the two points is not torsion identically in $\lambda$, and refer to \cite{SwD}; we only add that  this may be done by using either  specialization at some $\lambda\in\Q$ (and then any of the usual arithmetic methods) or also on considering functional heights, on viewing $\lambda$ as a variable.

In particular, by a well-known result of Silverman, the values of $\lambda$ such that this point is torsion on $E_\lambda$ are algebraic and have bounded height, hence the rational values of this type are finite in number. This implies that $E_l(\Q)$ has positive rank for all but finitely many $l\in\Q$, proving that $F(\Q)$ is Zariski-dense in $F$.  

\medskip

\begin{rem} The paper \cite{SwD} of Swinnerton-Dyer gives a Weierstrass equation and uses in practice the Lutz-Nagell theorem for any rational value of $\lambda$,   avoiding  the appeal to Silverman's result. Also, it does 
not only verify that the rational points are Zariski-dense in $F$, but proves  that they are dense in the real topology. This is a `piece' of the WAP  for $F$. On the other hand, we do not know whether $F$ has the WAP or even the WWAP. This is studied by Swinnerton-Dyer also in the more recent paper \cite{SwD3}, where a double elliptic fibration is used, as we  shall also do for our (different) purposes. 
\end{rem}

\medskip

For  future reference, we note that starting with e.g. the different  line $L': x=w, y=-z$  and intersecting $F$ with the pencil of planes $\Pi'_\mu: x-w=\mu(y+z)$, we obtain another pencil of elliptic curves $E'_\mu$, essentially fibers of the rational map $\mu={w-x\over y+z}:F \dashrightarrow\P_1$. The rational points on $F$ produce rational values of both maps $\lambda,\mu$ (when they are defined).\footnote{This double fibration by elliptic curves yields two sets  $\{[m]_i:m\in\Z\}, i=1,2$ of rational endomorphisms of $F$ obtained by integer multiplication on the two families; on using the compositions $[m]_1\circ [n]_2$, $m,n\in\Z$,  applied to a given rational point, we again obtain a Zariski-dense set of rational points.}

\bigskip

We shall now prove the HP for $F$, using both of the above  fibrations  by elliptic curves. We note that even with the help of the Vojta's conjecture an immediate deduction of the HP seems not obvious. Indeed, since $K_F=0$,  the canonical class of a cover $Y$ of $F$ is given by the ramification divisor $R$ (considered with multiplicities).
In case this is a `big' divisor, the Vojta's conjecture would imply that  $Y(k)$ is not Zariski-dense in $Y$ and then this cover would be negligible. But the bigness of $R$ is not guaranteed, and to take care of this  possibility appears implicitly also in our argument below.

\medskip

If the Hilbert Property fails for $F/\Q$,    then let $\pi_i:Y_i\to F$ be finitely many covers  each of degree $>1$ such that $F(\Q)-\bigcup \pi_i(Y_i(\Q))$  is not Zariski-dense. We may suppose that  the $Y_i,\pi_i$ are defined over $\Q$, for when this does not happen  the corresponding map produces a non-dense set of rational points on $F$, as is easy to see by conjugating over $\Q$.  

We shall inspect these covers by restricting them above the elliptic curves coming from the above fibrations. \footnote{We note that considering a single fibration (and merely the rational points coming from the above sections)  would not suffice in absence of additional information. Indeed, under e.g. the first fibration, $F$ may be seen as an elliptic curve over $\Q(\lambda)$, with a point of infinite order defined over $\Q(\lambda)$; then the group generated by this point would lift to the union of the two covers of this elliptic curve obtained by division by $2$ followed by suitable translations (as in the weak Mordell-Weil).  
}

\medskip

Let $Y,\pi$ be one of these covers, where we can assume $Y$ to be projective and $\pi$ finite. Note that $F$ is smooth and simply connected, hence algebraically simply connected.  

Therefore the branch locus  of $\pi$ contains some irreducible curve $B$ on $F$.  

\medskip

Now,  
a first case occurs when $\lambda$ is not constant on $B$ (including the case  $B=L$), and  let us call   `of the first type' the covers containing such a $B$.

 This fact   
  implies that  $B$ shall generically meet the elliptic curves $E_l$. 
\medskip

A subcase occurs when the cover is generically  reducible  above $E_l$ for $l\in\C$. Then 
 it becomes reducible after a base change to $C\times_{\P_1}F$, where $C$ is a suitable  curve (the fiber product being understood for the map $\lambda$ on $F$). But then, since the cover is irreducible over $F$,  the components  above $E_l$ may be defined over $\Q$ only for a thin set of rational values of $l$. That is, outside a thin set of $l\in\Q$, the cover restricted above $E_l$  is irreducible  over $\Q$ but reducible  over $\overline\Q$. It is a well known easy fact that this implies that the cover restricted to such a rational $l$ has only finitely many rational points (see e.g. \cite{SeTGT}, p. 20, first Remark). 

In conclusion, in this subcase we have only finitely many rational points above $E_l$, except possibly for a thin set of $l\in\Q$.

\medskip

The other subcase occurs when the cover is generically irreducible  over $E_l$, hence irreducible for all but finitely many $l\in\C$; now the (irreducible) curve above $E_l$ will have genus $>1$ and shall contain only finitely many rational points by Faltings' theorem. So we still have the previous conclusions, actually  for all but finitely many $l$. 

\medskip

We also conclude that if all the covers in question would be  of this type then for  `most' $E_l$ only finitely many rational points of $E_l$ would lift to the cover, and we would have a contradiction.

Let us then study separately the covers such that   the whole branch locus (apart from finitely many points) consists entirely of curves on which $\lambda$ is constant (so curves which are components of some  $E_l$); let us refer to these covers as being `of the second type'. 

Again we have two subcases, the first one being when the cover is generically reducible  above $E_l$. As before, apart from a thin set of rational $l$, only finitely many points may lift to the cover, and we may disregard these covers as well, and put them together with the ones of the first type.

We denote by $T$ the union of the (exceptional)  thin sets of rational numbers $l$  that we have just described; it is still a thin set (in $\Q$). 

\medskip

Suppose then that $Y$ is of the second type and  that it remains generically irreducible  above $E_l$ for $l\in\C$; we denote by $S$ the set of these covers. 

Then for a cover in $S$,  $\pi^{-1}(E_l)$ will be   irreducible for all but finitely many $l$ (and  we may assume these exceptional ones to be in $T$), unramified, and hence will become an elliptic curve $\cE_l$ (after a choice of origin), and the map $\pi|_{\cE_l}$ will be a translate of an isogeny.  For $l\in\Q-T$, our assumptions and the above arguments imply that all but finitely many rational points of $E_l$ lift to rational points of some $\cE_l$ (i.e. for a suitable $(Y,\pi)$ in $S$). 

Now, for all of these $l\in\Q$, the group $E_l(\Q)$ is a finitely generated abelian group of positive rank (by the above construction); and of course also the involved groups $\cE_l(\Q)$ are finitely generated, and  the $\pi(\cE_l(\Q))$ shall be translates of subgroups of $E_l(\Q)$. We now appeal to the following simple

\begin{lem} \label{L.all} Let $G$ be a finitely generated abelian group of positive rank,   let, for $u$ in a finite set $U$,  $H_u$ be  subgroups of $G$ and $h_u\in G$. Suppose that  $G-\bigcup_{u\in U}(h_u+H_u)$ is finite. Then this complement is actually empty.
\end{lem}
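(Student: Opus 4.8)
The statement is a clean finitary fact about finitely generated abelian groups, so the plan is to reduce it to the structure theorem. Write $G \cong \Z^r \oplus G_{\mathrm{tors}}$ with $r \geq 1$. The key dichotomy is: for each $u \in U$, either $H_u$ has finite index in $G$, or it has infinite index. If $H_u$ has infinite index, then the coset $h_u + H_u$ is a ``thin'' piece of $G$ in the sense that it misses an arithmetic progression along some direction; more precisely, $H_u$ projects to a proper subgroup of $\Z^r$ (after modding out torsion) hence to a subgroup of rank $< r$ or of full rank but infinite index, and in either case $h_u + H_u$ has density zero in $G$ with respect to any reasonable counting (e.g. counting points in boxes $[-N,N]^r$). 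The first step is therefore to discard, up to enlarging nothing, all the $H_u$ of infinite index: a finite union of density-zero cosets is still density-zero, so it cannot have finite complement in $G$ (since $G$ itself has positive density $\sim (2N)^r \cdot |G_{\mathrm{tors}}|$). Hence for the hypothesis ``$G - \bigcup_u (h_u + H_u)$ is finite'' to hold, the cosets of infinite-index $H_u$ are irrelevant and we may assume every $H_u$ has \emph{finite} index in $G$.

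Now I would argue directly with the finitely many finite-index subgroups $H_u$, $u \in U$. Let $N$ be the least common multiple of the indices $[G:H_u]$, so that $NG \subseteq H_u$ for all $u$; equivalently every $H_u$ is a union of cosets of $NG$. Consider the finite quotient $\bar G := G/NG$ and the images $\bar H_u$, $\bar h_u$. The hypothesis that $G - \bigcup_u (h_u + H_u)$ is finite translates, after passing to the quotient, into the statement that $\bar G = \bigcup_u (\bar h_u + \bar H_u)$ \emph{except for finitely many cosets of $NG$} — but since $\bar G$ is finite and each coset of $NG$ is infinite (as $r \geq 1$ forces $NG$, hence each of its cosets, to be infinite), ``finitely many cosets missed'' is impossible unless zero cosets are missed. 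Therefore $\bar G = \bigcup_{u} (\bar h_u + \bar H_u)$, i.e. the finite group $\bar G$ is covered by the cosets $\bar h_u + \bar H_u$. Pulling back, $G = \bigcup_u (h_u + H_u)$ exactly, so the complement is empty.

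The cleanest packaging avoids even the quotient: once all $H_u$ have finite index, pick any $g \in G - \bigcup_u(h_u+H_u)$ (if the complement is nonempty); the coset $g + NG$ is entirely contained in the complement, because $g + NG \subseteq g + H_u$ is disjoint from $h_u + H_u$ for each $u$ (two cosets of the same subgroup $H_u$ are equal or disjoint, and $g \notin h_u + H_u$). Since $r \geq 1$, the coset $g + NG$ is infinite, contradicting finiteness of the complement. Hence the complement is empty.

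\emph{Main obstacle.} There is no serious obstacle; the only thing requiring a moment's care is the reduction step, namely making precise that an infinite-index subgroup contributes a ``negligible'' coset so that such $H_u$ cannot help cover a cofinite set. This is where one genuinely uses $\mathrm{rank}(G) > 0$: one must check that $G$ is not itself a finite union of cosets of infinite-index subgroups, which follows from the density-zero observation above (or, slightly more elementarily, from B.H. Neumann's lemma that in any covering of a group by finitely many cosets, those of infinite-index subgroups can be omitted). Everything else is a direct application of the fact that distinct cosets of a fixed subgroup are disjoint, combined with the triviality that an infinite set cannot be cofinite inside... — rather, that a cofinite subset of $G$ cannot contain a full coset of an infinite subgroup only if that coset is avoided, which forces emptiness of the complement.
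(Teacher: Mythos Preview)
Your argument is correct and is essentially the paper's own proof: both separate the full-rank (equivalently, finite-index) subgroups from the smaller-rank ones, pass to a common finite-index subgroup (the paper takes $H=\bigcap_{u\in U'}H_u$, you take $NG$), and observe that if the finite-index cosets fail to cover $G$ then some full coset of this common subgroup lies in the complement of the finite-index cosets and hence would have to be covered, up to a finite set, by cosets of smaller-rank subgroups --- impossible by the density count.

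One caution about the exposition of your ``reduction step'': as written, the sentence ``the cosets of infinite-index $H_u$ are irrelevant and we may assume every $H_u$ has finite index'' is not a valid reduction --- discarding those cosets can only \emph{enlarge} the complement, so the hypothesis need not survive. What actually works is either (a) the Neumann-lemma route you mention (add the finitely many exceptional points as cosets of the trivial subgroup, then Neumann lets you drop \emph{all} infinite-index cosets at once, giving $G=\bigcup_{u\in V}(h_u+H_u)$ directly), or (b) the argument without any reduction: pick $g$ in the complement, note $g+NG$ avoids every finite-index coset, hence $g+NG$ is covered up to a finite set by the infinite-index cosets, and then invoke the density-zero observation \emph{at this stage}. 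Route (b) is exactly the paper's proof; you have all the ingredients, just arranged slightly out of order.
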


\begin{proof}[Proof of Lemma] Let $U'$ be the subset of $u\in U$ such that $H_u$ has the same rank $r>0$ of $G$. The intersection $H:=\bigcap_{u\in U'}H_u$ is clearly of rank $r$. If $\bar h_u$ denotes the image of $h_u$ in $G/H$ then $\bigcup_{u\in U'}(\bar h_u+(H_u/H))$ either covers $G/H$ or not.

 In the first case we have $\bigcup_{u\in U'}(h_u+H_u)=G$, proving the conclusion of the lemma (actually with a possibly smaller set of $u$ than needed). 
 
 In the second case, the complement of $\bigcup_{u\in U'}(h_u+H_u)$ in $G$ contains some coset of $H$, hence cannot be covered up to a finite set by finitely many cosets of subgroups of smaller rank, a contradiction which proves the lemma.
\end{proof}

Using the above remarks and the lemma we conclude that for $l\in\Q -T$   
actually all the points in $E_l(\Q)$ are lifted to rational points on some cover in $S$. 

\medskip

Now, consider a curve $E'_m$, for some general enough $m\in\Q$, supposing also that $m$ has  large enough height so that $E'_m(\Q)$ is infinite. 

Since $E'_m$ is not among the $E_l$, it shall intersect each of the $E_l$  in a finite set, which shall be  nonempty for general $m$.  So, each cover in $S$ shall be somewhere ramified above $E'_m$. 

By the same argument as given above for the covers of the first type, and for $m$ outside a suitable  thin set  $T'$  of $\Q$, only finitely many points of $E'_m(\Q)$ can lift to rational points of covers in $S$.

However any rational point $p$  in $E'_m(\Q)$ lies also on some $E_l$, where $l$ is just the value $\lambda(p)$ of the map $\lambda$ at the rational point. 

Therefore, if we can find $m\in \Q -T'$ such that $\lambda(p)$ is not in $T$ for infinitely many $p\in E'_m(\Q)$, we have a contradiction with Lemma \ref{L.all}, which would say that all such points lift to some cover in $S$. 

\medskip

We can then suppose that, for any given  $m\in\Q-T'$ all but finitely many points  $p$ in  $E'_m(\Q)$ (which is an infinite set) are such that $\lambda(p)\in T$. Since $T$ is a thin set, it is a finite union of images 
$\varphi(Z(\Q))$, for morphisms $\varphi:Z\to\P_1$, of degree $>1$, from certain curves $Z$.  

Each such morphism will be branched over a nonempty finite subset  of $\P_1$ (actually containing at least two points) and let us denote   the union of these finite sets of branch points by $\cR\subset\P_1(\overline{\Q}))$. 

The set of branch points of $\lambda$ restricted to $E'_m$ depends {\it a priori} on $m$. Indeed, it turns out that each branch point depends on $m$, as we prove in the following

\begin{lem} For all $m$ such that $E'_m$ is smooth, the restriction to $E'_m$ of the rational function $\lambda$ is a  map of degree two, having four ramification points. The value  of $\lambda$ at each of these ramification points is a  non-constant (algebraic) function  of $m$.
\end{lem}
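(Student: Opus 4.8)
The plan is to realise $\lambda|_{E'_m}$ as a linear projection of the plane cubic $E'_m$, and then, for the non-constancy assertion, to play the two elliptic fibrations $\lambda$ and $\mu$ off against each other. First I would record the geometry of the two pencils. Since $L=\{x=z\}\cap\{w=y\}$, the indeterminacy locus of the rational map $[x:y:z:w]\mapsto[w-y:x-z]$ defining $\lambda$ is exactly $L$, and $\Pi_\ell$ (the plane $w-y=\ell(x-z)$) is the closure of $\{\lambda=\ell\}$; likewise the indeterminacy locus of $\mu$ is $L'$ and $\{\Pi'_m\}_m$ is the pencil of planes on which $\mu$ is constant, $\Pi'_m$ being characterised by $F\cap\Pi'_m=L'\cup E'_m$. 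One has $L\subset\Pi_\ell$, $L'\subset\Pi'_m$, $F\cap\Pi_\ell=L\cup E_\ell$, and $L\cap L'=\emptyset$. As $L\not\subset\Pi'_m$, the point $Q_m\seteq L\cap\Pi'_m$ is well defined; it lies on $F$ (being on $L$) but not on $L'$, so it lies on $E'_m$; explicitly $Q_m=[1+m:1-m:1+m:1-m]$. Moreover $Q_m$ is precisely the point at which $\lambda$ restricted to $\Pi'_m$ becomes indeterminate, and the level set of $\lambda|_{\Pi'_m}$ over $\ell$ is the line $A_{\ell,m}\seteq\Pi_\ell\cap\Pi'_m$ through $Q_m$. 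Hence $\lambda|_{E'_m}$ is the projection of the smooth plane cubic $E'_m$ away from the point $Q_m\in E'_m$: a general line through $Q_m$ meets $E'_m$ once at $Q_m$ and at two further points, so $\lambda|_{E'_m}$ has degree $2$; and Riemann--Hurwitz (a genus-one double cover of $\P_1$, in characteristic $0$) then gives a ramification divisor of degree $4$, hence exactly four, necessarily simple, ramification points. The symmetric discussion shows that $\mu|_{E_c}$ is the projection of the plane cubic $E_c$ from the point $R_c\seteq L'\cap\Pi_c\in E_c$, and that the line $A_{c,m}$ also passes through $R_c$, being the fibre of $\mu|_{\Pi_c}$ over $m$.

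For the non-constancy — the substantive point — I would argue by contradiction. Suppose that for $m$ ranging over an infinite set $\Sigma$ (avoiding the finitely many $m$ with $E'_m$ singular, with $Q_m$ a flex of $E'_m$, or where one of the exceptional coincidences mentioned below occurs) one of the four branch values equals a fixed number $c$, with corresponding ramification point $S_m\in E'_m$. Then $S_m\neq Q_m$, so $T_{S_m}E'_m=\overline{Q_mS_m}$; since $S_m$ lies in the fibre of $\lambda|_{E'_m}$ over $c$ it lies on $A_{c,m}$, and $Q_m\in A_{c,m}$ as well, whence $T_{S_m}E'_m=A_{c,m}=\Pi_c\cap\Pi'_m$. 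Because $S_m\in A_{c,m}\subset\Pi_c$ and $S_m\in F$ while $S_m\notin L$ (otherwise $S_m\in E'_m\cap L=\{Q_m\}$), we get $S_m\in E_c$; and since $T_{S_m}E'_m\subset T_{S_m}F$ and $A_{c,m}\subset\Pi_c$, the line $A_{c,m}$ lies in $T_{S_m}F\cap\Pi_c$, which for $S_m$ a smooth point of $E_c$ is the tangent line $T_{S_m}E_c$. Thus $A_{c,m}=T_{S_m}E_c$: the line $A_{c,m}$, running through $R_c$, is tangent to $E_c$ at $S_m\neq R_c$ — precisely the condition that the projection $\mu|_{E_c}$ from $R_c$ ramifies at $S_m$, with branch value the parameter of $A_{c,m}$ in the pencil of lines through $R_c$, namely $m=\mu(S_m)$. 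So every $m\in\Sigma$ would be a branch value of $\mu|_{E_c}$; but $\mu|_{E_c}$ is a non-constant morphism from a projective curve ($E_c$, a cubic in $\Pi_c$, is not contained in any fibre of $\mu$), hence finite, with only finitely many branch values — a contradiction. Therefore no branch value is constant in $m$; and since the four are cut out by the vanishing of the discriminant of the degree-$2$ cover, whose coefficients are rational in $m$, they are non-constant algebraic functions of $m$.

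I expect the genuine obstacle to lie in this last step, and specifically in the two geometric translations it rests on: that $\lambda|_{E'_m}$ is literally the projection of $E'_m$ from the varying point $Q_m=E'_m\cap L$, and that tangency of $A_{c,m}$ to $E'_m$ at $S_m$ propagates, through the common tangent plane $T_{S_m}F$, to tangency of $A_{c,m}$ to $E_c$ at $S_m$ — which is what converts the hypothetical constancy into the absurd statement that $\mu|_{E_c}$ is ramified over every value $m$. The remaining ingredients are routine checks on the explicit equations: that $E_c$ is a smooth cubic for all but finitely many $c$, that the various finite exceptional sets of $m$ may be discarded, and that when $E_c$ is reducible none of its components is a line through $R_c$ (which would force $A_{c,m}$ to be independent of $m$, again impossible).
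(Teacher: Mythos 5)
Your argument for the degree-two statement is essentially the paper's: both of you realise $\lambda|_{E'_m}$ as the projection of the plane cubic $E'_m$ from the point $p_m=Q_m=L\cap\Pi'_m$, and read off the degree and the four simple ramification points from Riemann--Hurwitz. For the non-constancy of the branch values, however, you take a genuinely different route. The paper argues by degeneration: if a branch value were a fixed $l$, the plane $\Pi_l$ would have to contain, for every $m$, a line through $p_m$ tangent to $E'_m$; specialising to $m=0$ and $m=\infty$, where $E'_m$ collapses to three lines of which exactly one passes through $p_m$ (namely $x-w=y-z=0$ and $y+z=x+w=0$ respectively), one checks directly that those two lines are not coplanar, so no single $\Pi_l$ can work. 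You instead exploit the duality between the two fibrations: you observe that $A_{c,m}=\Pi_c\cap\Pi'_m$ joins $Q_m\in L$ to $R_c\in L'$, translate ``$S_m$ is a $\lambda$-ramification point on $E'_m$ over $c$'' into ``$A_{c,m}$ is tangent to $E'_m$ at $S_m$,'' then use $T_{S_m}F$ to transfer this to ``$A_{c,m}$ is tangent to $E_c$ at $S_m$,'' i.e.\ $m$ is a branch value of $\mu|_{E_c}$ --- which can hold for only finitely many $m$.

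Both approaches are correct. The paper's computation is shorter and sidesteps the delicate case distinctions (you must discard the $m$ where $Q_m$ is a flex, where $S_m$ lands on a singular point of $E_c$, where $T_{S_m}F=\Pi_c$, etc., and you do flag these but largely wave at them), at the cost of requiring the explicit identification of the degenerate fibres and a coplanarity check. Your argument is more conceptual and reveals a pleasing symmetry --- it explains \emph{why} the branch values must move, by making ``a constant $\lambda$-branch value'' equivalent to ``infinitely many $\mu$-branch values'' --- and it would generalise more readily to other doubly-elliptically-fibred K3's, which is in the spirit of Remark 3.4(ii) in the paper. If you wanted to tighten it, you should spell out that $E_c$ may be assumed smooth (true for all but finitely many $c$), that $T_{S_m}F\ne\Pi_c$ (else $E_c$ would be singular at $S_m$), and that no component of a reducible $E_c$ is a line through $R_c$; none of these is hard, but they are load-bearing.
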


\begin{proof} This fact could be checked by a (nasty) computation, but we  avoid this. Let us fix   an $m$ so that $E'_m$ is a smooth cubic lying on the  plane $\Pi'_m$. In order to compute the degree of the map $\lambda$ restricted to $E'_m$, we have to compute the number of points in $E'_m$ where it takes a generic value. Now,   by our opening construction, a `value' of the map $\lambda$ is represented by a line passing through $p_m:=\Pi'_m\cap L=(1+m:1-m:1+m:1-m)\in E'_m$; so, since  such a line generically intersects $E'_m$ in two more points, the degree of $\lambda$ is $2$. The ramified values of $\lambda$ are the tangent lines passing through $p_m$, apart from  the tangent line at $p_m$ (unless $p_m$ is a flexus). 

We want to prove that these  values are not constant as $m$ varies. Of course, in order to compare  values of $\lambda$ for different values of $m$ we have to identify a value of $\lambda$ with the corresponding plane, not with a line on the $\mu$-plane $\Pi'_m$. More precisely, given the  plane $\Pi'_m$ and a point $p\in E'_m$,   $\lambda(p)$ will be the $\lambda$-plane (i.e. in the family $\Pi_\lambda$) generated by $p$ and $L$, i.e. the $\lambda$-plane generated by the line connecting $p$ with $p_m$ and the line $L$. It then makes sense to compare two   values  $\lambda(p), \lambda (q)$ even if $p$ and $q$ do not lie on a same $\mu$-plane.

 Now, if a ramified value for $\lambda$ were fixed, say equal to $l$, the  $\lambda$-plane $\Pi_l$ would contain  a tangent line to $E'_m$, passing through $p_m$, for all values of $m\in\P_1$. To see that this does not happen, consider the degenerate cases $E'_0$ and $E'_\infty$. These cubic curves degenerate into three lines, of which only one passes through $p_m$. For $E'_0$ this line has an equation $x-w=y-z=0$, while for $E'_\infty$ it is the line of equation $y+z=x+w=0$. These two lines are not coplanar, concluding the proof of the lemma. 
 
\end{proof}

We can now conclude the proof of the theorem. By this  lemma, the set of branch points of $\lambda$ restricted to $E'_m$ may intersect  $\cR$ at most for finitely many   $m\in\P_1$. Choose an  $m\in\Q-T'$ such that this does not happen Now fix a morphism $\varphi:Z\to\P_1$ as before. 
 The intersection $\varphi(Z(\Q))\cap \lambda(E'_m(\Q))$ is the image in $\P_1$ of the rational points in $W(\Q)$, where $W\to\P_1$ is obtained as a  fiber product of $\varphi: Z\to\P_1$ and $\lambda: E'_m\to\P_1$. Since $\varphi$ has degree $>1$,  $\varphi$ must ramify above some point of $\P_1$ (actually above at least   two points) and by our choice  these points are not branch points for $\lambda$. Hence the map $W\to E'_m$ must also ramify somewhere,  for each irreducible component of $W$, which implies that  each component of $W$ has genus $\geq 2$, so contains only finitely many rational points by Faltings' theorem.

 This proves that $\lambda(E'_m(\Q))\cap \varphi(Y(\Q))$ is finite for each morphism $\varphi$ of the given finite set, hence $\lambda(E'_m(\Q))\cap T$ is finite.
 
Summing up,  in all cases we obtain a contradiction, proving finally the sought result.


\medskip

\begin{rem} (i)  Inspection shows that for the proof Faltings' theorem may be replaced by Mumford's earlier estimate for rational points on curves of genus $>1$.

(ii) The given argument applies for more general  K3 surfaces, in particular also for the intermediate K3 surface giving rise to the example 
treated in the next subsection (see Remark \ref{R.noHP}(iii) for a few words on this). We have preferred to stick to  the  special case of the Fermat surface for the sake of simplicity.

(iii) As already mentioned, we do not know whether this surface $F$ has the WAP or at least the WWAP; this is unlikely to hold if the only rational points in $F$ come from the above  construction. 
\end{rem} 

\subsection{An Enriques surface without the Hilbert Property and the proof of Theorem \ref{T.Enriques}}\label{SS.Enriques}  We shall now present the details of the example announced in the introduction, of a surface without the Hilbert Property, with a Zariski-dense set of rational points and admitting no non-constant maps to abelian varieties, hence proving   Theorem \ref{T.Enriques}.

This surface is a so-called {\it Enriques surface}, defined as a smooth surface with vanishing irregularity and geometric genus, and vanishing $2K$; no such surface can be rational, e.g. because of this last fact. Such surfaces can have a Zariski-dense set of rational points;   some explicit examples can be found e.g. in \cite{HS}. Actually, Bogomolov and Tschinkel \cite{BT} proved that after a finite extension of their field of definition, every Enriques surface  has a Zariski-dense set of rational points (potential densisty of rational points).

\medskip

To obtain our example, we start from the automorphism $\sigma$ of $F$ defined by
\begin{equation*}
\sigma(x:y:z:w)=(x:iy:-iz:-w).
\end{equation*}
It has order $4$ and no fixed points  in  $F$. Its square is $(x:y:z:w)\mapsto (x:-y:-z:w)$  and has the eight  fixed points $(1:0:0:i^m)$, $(0:1:i^m:0)$ for $0\le m<4$. Let $F'$ be the quotient of $F$ by the subgroup generated by $\sigma^2$ (see \cite{SeAGCF}, III.12, for quotients of a variety by a finite group). It turns out that a smooth model of $F'$ is also a K3 surface, on which $\sigma$ acts (as an automorphism of order $2$) without fixed points. In fact, a fixed point of $\sigma$ on $F'$ would correspond to an orbit $p,\sigma^2(p)$ on $F$, fixed by $\sigma$; this would imply either $\sigma(p)=p$ or $\sigma(p)=\sigma^2(p)$, so either $p$ or $\sigma(p)$ would be fixed by $\sigma$ on $F$, which is not the case. So $F'$ is an unramified
cover of its quotient by the subgroup of order $2$ generated by $\sigma$. A smooth model of this quotient is the desired Enriques surface.

We omit a discussion of smooth models and also on the quotient varieties so obtained  (which is not in fact necessary for our assertions) and only say that  a surface birational to the Enriques, call it  is $\cE$,  may be defined in $\P_3$ by the equation appearing in Theorem \ref{T.Enriques}, i.e. 
\begin{equation*}
\cE:  x_0x_2^4+x_1x_3^4=x_0^2x_1^3+x_0^3x_1^2,
\end{equation*}
(with singular points  $(1:0:0:0)$ and $(0:1:0:0)$) whereas $F'$ is birational to the cover $\cF$ of $\cE$ given in $\P_4$ by the further equation $x_0x_1=x_4^2$. 

Note the dominant rational map from $F$ to $\cE$ defined by 
\begin{equation*}
x_0=x^4, x_1=y^4, x_2=xy^2z, x_3=x^2yw,
\end{equation*}  
where the four expressions are invariant by $\sigma$. Through the rational points that we have observed on $F$, this yields a Zariski-dense set of rational points on $\cE$.

\medskip


We continue by proving Theorem \ref{T.Enriques}, where we could use Theorem \ref{T.sc-HP}, through  the above variant  of the Chevalley-Weil Theorem; but it is easier to argue directly,  on showing  that any rational point on $\cE$ lifts  either to $\cF$ or to the similar cover $\cF^-$ defined by the equation $x_0x_1=-x_4^2$. 

But before this, let us observe, independently of the above interpretation of $\cE,\cF$ as related to quotients of $F$, that  these covers are indeed irreducible. For this, we may use the functions $u_i:=x_i/x_0$ and the equation $u_2^4=u_1(u_1^2+u_1-u_3^4)$ for $\cE$, and the further equation $u_4^2=\pm u_1$ for $\cF, \cF^-$ resp. (exhibiting our surfaces as covers of the $(u_1,u_3)$-plane).   Since none of the factors on the right side of the first equation is a square in $\C(u_1,u_3)$, it follows that the equations yield linearly disjoint extensions of $\C(u_1,u_3)$, proving what we need.


\medskip

Let  now $(a_0:a_1:a_2:a_3)\in\cE(\Q)$, where $a_i$ are coprime integers. It suffices to show that one among $\pm a_0a_1$ is a square, where we can assume that $a_0a_1\neq 0$. For this, let $p$ be a prime number and let $p^{e_i}||a_i$, so $e_i$ are integers $\ge 0$. If  the $p$-adic order of $a_0a_1$ is not even,  then $e_0+e_1$ is odd. But then the numbers $e_0+4e_2, e_1+4e_3, 2e_0+3e_1, 3e_0+2e_1$ are pairwise distinct, because they are congruent modulo $4$ resp.  to $e_0,e_1,e_1+2,e_0+2$, and hence actually   pairwise   incongruent modulo $4$. However this is impossible because  these numbers are the $p$-adic orders of the four terms appearing in  the equation defining $\cE$. 

Hence $a_0a_1$ has even $p$-adic order  at every prime, proving our contention.\footnote{The argument yields directly that $\cE(\Q)$ lifts to $\cF(\Q(i))$, which is analogue to the standard Chevalley-Weil theorem. The present device, on replacing one cover by two covers to maintain the ground field, is similar to the above version of   Chevalley-Weil.}    

Of course, this shows that $\cE$ has not the Hilbert Property, as stated. 

\begin{rem} \label{R.noHP}  (i) Exactly the same proof shows that for every discrete valuation $\nu$ of a field $K$, and for each point $P\in\cE(K)$, the  valuation of   $(x_1/x_0)(P)$   is even.  If we apply this with $K$ equal to   the function field of $\cE$, and observe that $x_1/x_0$ is not a square in that function field, we obtain  that  every {\it smooth} model of $\cE$ admits an irreducible unramified double cover, namely the one corresponding to taking the square root of $x_1/x_0$. Also the quotient variety described above (i.e. $F'/<\sigma>$) is not simply connected, because $\sigma$ has no fixed points on $F'$. 
On the contrary, as already said, this singular model is (topologically, so algebraically) simply connected.\footnote{We have preferrred to use this model of a hypersurface of $\P_3$ both for simplicity and also because it illustrates some subtleties related to the fundamental group of different models.} 

(ii) The argument shows easily that $\cE$ has not the Hilbert Property over any number field $k$: again, we can use the refined Chevalley-Weil Theorem (applying in practice Theorem \ref{T.sc-HP}) or argue directly, using that elements having even valuation at all primes are squares up to finitely many factors (depending on $k$). 

(iii) It is possible to prove the HP for the above surface  $F'=F/\sigma^2$; we outline the argument. Using the HP for $F$ (i.e. Theorem \ref{T.Fermat}) the crucial issue is to produce rational points on $F'$ not coming from covers isomorphic to $F$ (over $\overline\Q$). In turn this amounts to find `sufficiently many'  points on $F$ defined over a quadratic field, on which $\sigma^2$ acts as a conjugation. Our formulae translate this problem into rational points for   twisted models of $F$ defined by $x^2+d^2y^4=d^2z^4+w^4$, where $d$ is a nonzero integer, not a square.  Given then finitely many covers of $F'$ of degree $>1$, it is a matter of routine  to pick a suitable definite $d$ (depending on the covers) and repeat the above proof pattern of Theorem \ref{T.Fermat} for   this new surface, concluding the argument. 
\end{rem}

\subsection{A surface with the Hilbert Property but without the Weak Approximation Property and the proof of Theorem \ref{T.HP-WAP}}\label{SS.HW}
In this subsection we are going to exhibit an example of a surface $X/\Q$ with the HP but without the WAP, proving in particular Theorem \ref{T.HP-WAP}. 

The negation of WAP is  due in fact  to Swinnerton-Dyer \cite{SwD2} (who used the real valuation).  We define $X$ as a cubic  in $\P_3$, with coordinates $(t:x:y:z)$, by 
\begin{equation}\label{eq.chatelet1}
X: t(x^2+y^2)=(4z-7t)(z^2-2t^2).
\end{equation}\label{chatelet1}

It is singular precisely at the points $(0:1:\pm i:0)$. We observe at once that $X$ becomes rational over $\Q(i)$ (where the quadratic form $x^2+y^2$ is equivalent to $xy$). In particular, it is simply connected and has the WAP and  the  HP over $\Q(i)$. 

Actually, as stated in Theorem \ref{T.HP-WAP}, $X$ has the HP even over $\Q$. To see this, proving then half of the theorem, we argue somewhat similarly to the case of the Fermat surface $F$ ot Theorem \ref{T.Fermat}. Again, though this is simpler than before, we do not see any direct deduction of the HP from general known results.

Confining to the affine subset $t\neq 0$, we divide by $t$ and, setting $x/t=\xi, y/t=\mu, z/t=\lambda$, we find a    family of  conics in coordinates $\xi,\mu$:
\begin{equation*}
C_\lambda : \xi^2+\mu^2=(4\lambda-7)(\lambda^2-2).
\end{equation*}
and a family of
 elliptic curves  in coordinates $\lambda, \xi$:
\begin{equation*}
E_\mu : \xi^2=(4\lambda-7)(\lambda^2-2)-\mu^2. \qquad  \footnote{For given $\mu$, we may see this as a curve in the plane $(\lambda,\xi)$ and take as origin  the point at infinity; in our setting this  corresponds to the point $(0:0:1:0)$ on $X$, though this is immaterial for us.} 
\end{equation*}

Now, for instance, we find the (smooth)  rational point $x=y=t=1, z=2$ on $X$. As remarked in \cite{SwD2}, one may obtain infinitely many rational points by intersecting $X$ with the tangent plane at the  rational point, which would even show that $X$ is unirational over $\Q$; however here we shall proceed differently. Before this, we pause for a remark:

\begin{rem} 

Certainly the rational points coming from a single  unirational parametrisation would not be sufficient to confirm the HP: in fact, since this surface is not rational (over $\Q$), these points would all come, by the very definition,  from a cover of degree $>1$, namely the rational cover which exhibits unirationality. Hence a proof of the HP requires to produce other rational points.
   \end{rem} 

To go ahead, observe that the said rational point yields the rational point $\xi=1, \lambda=2$ on the elliptic curve $E_1$  
 which  may be checked to be non torsion. 

Then we find infinitely many rational points $(\xi_n,\lambda_n)\in E_1(\Q)$ and rational points $(1:\xi_n:1:\lambda_n)\in X(\Q)$. Now,  this  also yields  that the conic $C_{\lambda_n}$ 
has the rational point $(\xi_n,1)$ and thus infinitely many rational points, proving that $X(\Q)$ is Zariski-dense in $X$.

\medskip

To prove more, i.e. the HP, we proceed similarly to the case of the Fermat surface (however things shall be simpler now). 

Base change by the map $\mu: C_2\to\P_1$ yields a section $s$ to the surface: i.e. for $p=(a,b)\in C_2$  we set $s(p)=(2,a)\in E_b$. This is not identically torsion, because e.g. the map $\mu$ is branched above $\mu=\pm\sqrt 2$, whereas the bad reduction of the family $E_\mu$ occurs on a linearly disjoint field. Therefore a corollary of Silverman's theorem implies that $s(p)$ may be torsion on $E_b$ only for finitely many rational points $p=(a,b)\in C_2(\Q)$. In particular, for all but finitely many such rational values $b$, we have that $E_b(\Q)$ is infinite.

Note that for rational $b$ a rational point $q=(u,v)\in E_b(\Q)$ produces a rational point $(v,b)$ on $C_u$, and therefore $C_u$ becomes birational to $\P_1$ over $\Q$ and has `many' rational points. 

For later reference, let us denote by $A$ the set of rational numbers $l$ such that $C_l$ has a rational point; the given argument says that 

\medskip

\centerline{\it $l\in A$ whenever $l=\lambda(q)$ for a $q\in E_b(\Q)$, some $b\in\mu(C_2(\Q))$.}

\medskip

 Suppose now that $X$ has not the Hilbert Property (over $\Q$), and let  $\pi: Y\to X$ be one of the covers of $X$ which occur in this violation of HP, of degree $>1$, defined over $\Q$. We distinguish between two types of such covers.
 
 - {\tt First type}: this occurs when $Y$ remains generically irreducible above $C_\lambda$ (for $\lambda\in\C$). 
 
 Then $Y$ may be reducible above  $C_l$ only for finitely many $l\in \C$.  For each of the remaining $l\in A$, the image $\pi(Y(\Q))$ by definition covers at most a thin set in $C_l(\Q)$.
 
 - {\tt Second type}: this occurs when $Y$ is generically reducible above  $C_\lambda$ (i.e. reducible for generic  values of $\lambda$ in $\C$).  Now, for a special value $l\in\Q$,  either $Y$ is irreducible over $\Q$  above $C_l$ (but it is certainly reducible over $\overline\Q$ since it is of second type) or not. In the first sub-case, being reducible over $\overline\Q$ but irreducible over $\Q$, it may have only finitely many rational points (above $C_l$).  The second sub-case   may  happen (taking into account simultaneously all finitely many covers  of this second type) only for a thin set $T$ of values $l\in\Q$, i.e. the set of these rational $l$ is covered by  images $\varphi(Z)$, for finitely many morphisms 
 $\varphi:Z\to \P_1$ of degree $>1$. 
 
 \medskip
 
 The danger now is that $T$ contains all the $l\in\Q$ such that $C_l$ has a rational point (i.e. that $T$ contains $A$).
In particular, in view of the above remarks  in this `bad' case $T$ would contain all values of $\lambda$ on the rational points of a curve $E_b$, where $b$ can be any element of $\mu(C_2(\Q))$. 

To exclude this, we note that the set of branch points of $\lambda$ on $E_u$ consists of $\infty$ and the roots of the polynomial  $(4\lambda-7)(\lambda^2-2)-u^2$. Now, each given complex number $c$ can be such a root only for finitely many $u\in\C$. But then we can find a $b$ in the infinite set $\mu(C_2(\Q))$ such that the set of branch points of $\lambda:E_b\to \P_1$ intersects the set of branch points of $\varphi: Z\to\P_1$ at most at infinity, and this for all of the $\varphi$ in question.

Under this condition, (each component of)  the pullback cover $\lambda^*(Z)$ of $E_b$ must be ramified, hence of genus $>1$ and can have only finitely many rational points. Therefore for such numbers $b\in\Q$ we find that $\lambda(E_b(\Q))\cap T$ is finite, whence we may pick an $l\in \lambda(E_b(\Q))- T$. For such an $l$ the covers of the second type as well absorb at most a thin subset of the rational points of $C_l$, proving finally what we need.

\bigskip

It remains to check that $X$ has not the WAP. Now, this is done in \cite{SwD2}, where it is actually proved that there are no rational points in the real component of $X(\R)$ defined by $|z/t|\le \sqrt 2$. This itself excludes the WAP  already for the real place of $\Q$. In the next Remark we shall give an argument similar to the one in \cite{SwD2}, for excluding the WAP for a similar surface, for which the rational points are dense in the real ones.

\begin{rem}
 (i)  The slightly different  surface given by the equation
 \begin{equation}\label{eq.chatelet2}
 t(x^2+y^2)=(2z-7t)(z^2-2t^2)
 \end{equation}
 has rational points in both connected components of the real points (e.g. $(1:2:1:1)$ in the component $|z/t|\le \sqrt 2$ and $(1:13:1:6)$ in the other one).  Then it  is  not too difficult to prove that the rational points are dense in the real points. However again the WAP fails:  for instance there are no rational points  $(a:b:c:d)$ in the second component, with coprime integer coordinates  and such that $a>0$, $a\equiv d\equiv 1\pmod 4$, $b\equiv 0,c\equiv 1\pmod 2$ (conditions which define  a certain $2$-adic neighbourhood of $(1:2:1:1)$).  For otherwise   we would have $2d-7a\equiv -1\pmod 4$ and $2d-7a$ would be a positive integer dividing $a(b^2+c^2)$. Let then $p$ be a prime $\equiv 3\pmod 4$ and dividing $2d-7a$ to an odd power $h$ (which would exist). Now, $p$ divides $b^2+c^2$ to an even power, which is positive if and only if it divides both $b,c$.  It follows that if $p$ would not divide $a$ then $p$  would have to divide $d^2-2a^2$ as well, hence also $41a^2$, a contradiction. Therefore $p^l|| a$, where $l>0$, and hence $p$ divides $d$, and cannot divide both $b,c$, hence it does not divide $b^2+c^2$.  We conclude that $l=h+m$, where $p^m||d^2-2a^2$ and it easily follows that $p^h||d$ and $m=2h, l=3h$. But then, repeating the argument for all such $p$ we would conclude that $a\equiv 2d-7a\pmod 4$, a contradiction.
 
 (ii)  These last surfaces are unirational, and thus expected to have the WWAP (according to a conjecture of Colliot-Th\'el\`ene mentioned above).  Actually equations \eqref{eq.chatelet1} and \eqref{eq.chatelet2} define   so-called Ch\^{a}telet surfaces; for these surfaces, WWAP was proved by Colliot-Th\'el\`ene, Sansuc and Swinnerton-Dyer in \cite{CT-S-SD}; the case of cubic Ch\^{a}telet surface appeared also in \cite{SwD4}. 
 For completeness, in the Appendix 2 below we sketch a proof, for the surface of the theorem,  of a property very similar (albeit weaker) than the WWAP; it seems that  those  arguments should give the full WWAP.
 
 In any case, we remark again that we do not know whether the WWAP is  generally implied by the HP.

 \end{rem}


 \section{Appendices}

\subsection{\tt Appendix 1: On the Hilbert Property for Kummer surfaces } 

In this Appendix we consider Kummer surfaces over $\Q$, i.e. quotients $X=A/\pm I$ of an abelian surface $A$ defined over $\Q$; we assume that $A$ (and thus $X$) has  a Zariski-dense set of rational points. 

We shall sketch a proof  that 

\centerline{\it There is a Zariski-dense set of  rational points of $X$ which may not be lifted to $A$,}

 \noindent providing a piece of the Hilbert property for $X$. We remark that the full Hilbert property for $X$ would follow from this fact combined with the Vojta's conjectures; this second part could  presumably be dealt with unconditionally by the methods of \cite{Z}, where we hope that someone (we ?) will  undertake the task to fill the details in. \footnote{These reductions rely on the fact that  a smooth model of $X$ is known to be simply connected, and all its - ramified - covers not factoring via the canonical map $A\to X$ come  from ramified covers of $A$.}
 
\medskip

The statement is invariant by isogeny, and thus we may split the proof into  the two basic cases  when $A$ is  either the product of two elliptic curves or  $A$ is simple; in turn, in this second case $A$ is known to be isogenous to the Jacobian of a curve of genus $2$, where we limit here ourselves to the case when everything is defined over $\Q$. We assume that $A$ has a Zariski-dense set of rational points.

\smallskip

{\it First case}: $A=E_1\times E_2$. Here we suppose that the elliptic curve $E_i$ is given by Weierstrass equation $y^2=f_i(x)$, for cubic polynomials $f_1,f_2$ without multiple roots. Then $X$ is given birationally by the equation 
\begin{equation*}
f_2(x_2)=w^2 f_1(x_1).
\end{equation*}
Then, the above statement on   points in $X(\Q)$ which do not come from $A(\Q)$ translates into  the following assertion:

\medskip

{\it If both curves $y^2=f_i(x)$ have infinitely many rational points, there exists a Zariski-dense set of rational solutions of the displayed  equation such that $f_2(x_2)$ is not a square.}

\medskip

For this we consider the 2-dimensional family  $Z$ of curves $Z_{u_1,u_2}:f_1(u_1)f_2(x_2)=f_2(u_2)f_1(x_1)$ parametrized by $(u_1,u_2)\in\A^2$;  we have given an affine equation, but we implicitly consider the projective closure with respect to the plane $(x_1,x_2)$. 

The generic member has genus $1$ and there is a section  given by $(x_1,x_2)=(u_1,u_2)$. This  section can be taken as origin, giving $Z$ the structure of an elliptic curve $\cE$ defined over $\Q(u_1,u_2)$.

We can obtain a second section by the `tangent method', i.e. intersecting the cubic $Z_{u_1,u_2}$ with the tangent line  at $(u_1,u_2)$. Now, the difference of the sections gives a point on $\cE$, defined over $\Q(u_1,u_2)$. It may be checked that this point is not identically torsion.\footnote{A priori the torsion order would be absolutely bounded, because a rational torsion point over $\Q(u_1,u_2)$ cannot have `too large' order, due to the Galois theory of Fricke and Weber, no need to use the deeper arithmetical results here. But we do not even need to use this, or to perform computations: indeed, $Z_{u_1,u_2}$  depends only on $w:=f_2(u_2)/f_1(u_1)$, whereas we may  vary both  $u_1,u_2$; 
then  we would obtain a continuous family of points of finite order on a same elliptic curve, which is impossible.} 

Then by the results of Silverman, specialising first $u_1=a_1$, for a rational point $(a_1,b_1)\in E_1(\Q)$, then $u_2=a_2$, for a rational point  $(a_2,b_2)\in E_2(\Q)$,  of large enough height in both cases, we obtain a non torsion rational point  on the elliptic curve $b_1^2f_2(x_2)=b_2^2f_1(x_1)$.  Multiples of this point  are dense of the curve $Z_{a_1,a_2}$ and on varying $a_1,a_2$ this clearly yields the required density of rational points on our surface.

\medskip

{\it Second case: $A=$ the Jacobian of a curve $H$ of genus $2$ defined over $\Q$}.  We let $y^2=f(x)$ be an equation for $H$, where we may take $f\in \Q[x]$ to be a polynomial of degree $5$ without multiple roots. We may suppose $A$ to be simple, for otherwise we fall  in the previous case after isogeny. 

There is one point $\infty$ at infinity on a smooth model of $H$ (i.e. the unique pole of $x$) and we may  embed  $H$ in $A$  on defining, for $p\in H$, $[p]=$class of the divisor $(p)-(\infty)$. Then $A$ is the set of sums $[p]+[q]$, for $p,q\in H$ (where the sums $[p]+[p']$ yield $0$, $p\mapsto p'$ being the canonical involution on $H$, because $p,p'$ are the zeros of the function  $x-x(p)$).

 We have an infinity of quadratic points on $H$ obtained by putting $x=a\in\Q$ and taking $y$ as a square root of $f(a)$ (omitting the cases when $f(a)$ is a square, a thin and actually finite set).
 
  For $p$ one such quadratic point observe that $\phi(p):=2[p]\in A$ is a point defined over a quadratic extension of $\Q$ and satisfies $\phi(p)^\sigma+\phi(p)=0$, where $\sigma$ is the nontrivial automorphism of the quadratic field $\Q(\sqrt{f(a)})$; indeed. $\phi(p)^\sigma=\phi(p^\sigma)=\phi(p')=-\phi(p)$, because of the above observation $[p]+[p']=0$.  We have already observed in \S \ref{SS.qc} that these points yield rational points on $X$ which do not come from rational points on $A$.

  To find a Zariski-dense set of such points, observe that  $n\phi(p)$, for $n\in\Z$ denoting  multiplication  in $J$, are points with  the same property  as $\phi(p)$ (relative to the same quadratic field). Certainly the points $\phi(p)$ cannot  be all torsion on $A$, and taking their images in $X$ yields rational points of $X$, and provides the required dense set.
 
 \begin{rem} We observe that the multiples $n\phi(p)$, when written in the shape $[u]+[v]$, for $u,v\in H$, correspond generally to points such that $u^\sigma=v'$ and such that, putting $u=(a,b)$, we have $a,b$ defined over our quadratic field but  such that $a\not\in\Q$. 
 
 Also, we remark that the same methods may be applied over any number field. 
  \end{rem}

\medskip 
 
 \subsection{\tt Appendix 2: WWAP for a cubic surface}
 In this Appendix we shall sketch a proof of a form of the WWAP for the surface $X$ of \S \ref{SS.HW}; for a proof of a more general result see \cite{CT-S-SD}. Namely, we shall consider only the following property: {\it for a large enough $l_0$, for every finite set $S$ of primes $\ell >l_0$, and every set $\xi_\ell\in X(\F_\ell)$, $\ell\in S$, there exists a rational point $\x\in X(\Q)$ such that $\x\equiv \xi_\ell\pmod \ell$ for all $\ell\in S$.}
 
 We note that this property, {\it a priori } weaker than WWAP, is sufficient to imply the HP (by the same arguments given e.g. in \cite{SeTGT}). And actually even less would suffice;  for instance we could restrict the $\xi_\ell$ to lie outside a  prescribed proper subvariety of $X$. In fact, we shall give detail only for this last  even weaker property, and only indicate how one can remove this restriction. 
 
 In any case, in particular, the argument shall give as a byproduct an independent proof that $X/\Q$ has the HP.
 
 \medskip
 
 Before going ahead, we let $k$ be any  ground field of definition for $X$ (possibly char $k>0$), and recall the above notation, in particular the rational maps $\lambda,\mu: X\dashrightarrow \P_1$  which fiber $X$ by conics $C_\lambda$ and elliptic curves $E_\mu$. 
 
 The affine conic  $C:x^2+y^2=1$, isomorphic to the algebraic group $SO_2$, acts simply transitively on $C_\lambda$ as $(\xi,\mu)\mapsto (\xi x-\mu y,\xi y+\mu x)$, which also extends to yield a group of automorphisms of $X$. For $g\in C$, we shall denote by $q\mapsto g(q)$ this action, for $q\in X$.
 
 Corresponding to the elliptic curves $E_\mu$, we have multiplication maps $[m]:E_\mu\to E_\mu$, and we shall denote again by $[m]$ the rational map $[m]:X\dashrightarrow X$ obtained on viewing $X$ as an elliptic curve over $k(\mu)$. 
 
 By combining these maps, we can obtain a lot of rational self-maps on $X$ which are dominant and defined over the prime subfield of $k$. In particular, we can produce rational points starting from a given one. 
 For the application we are discussing, it turns out that it suffices to use the dominant rational maps (of degree $4$) 
 \begin{equation*}
 \sigma_g:=[2]\circ g:X\dashrightarrow X,\qquad g\in C.
 \end{equation*}
 We note that composing maps from  these families, and looking at the orbits of a given point, we can exhibit $X$ as a unirational variety over the prime field.
 
  We denote by $X_0$ the affine open subset $t\neq 0$ on $X$, with coordinates $\xi,\mu,\lambda$ and start from the point $p_0=(1,1,2)\in X_0$.  By acting with $\sigma_g$ we obtain the following points:  if $g^*:=g(p_0)=(a,b)\in C_2$, $b\neq 0$, then
  \begin{equation*}
 \sigma_g(p_0)=(a,s,r), \quad r=9({1\over b^2}-{1\over 4}),\quad s=-{6\over b}(r-2)-b.
 \end{equation*}
 Now, if $h\in C$, we obtain points $h(\sigma_g(p_0))=(h(a,s),r)$. Putting $h(a,s)=:(u,v)$, supposing $v\neq 0$  and duplicating, we obtain, on denoting  $f(z)=(4z-7)(z^2-2)$,
\begin{equation*}
\lambda( \sigma_h\sigma_g(p_0))={7\over 4}+{f'(r)^2\over 16 v^2}-2r.
\end{equation*}

Let now as above $k$ be a ground field, 
and let us choose $z_0\in k$; we try to equate the last displayed value to $z_0$ (with $v\neq 0$), which amounts to 
\begin{equation}\label{E.A}
4v^2(4z_0+8r-7)=f'(r)^2.
\end{equation}
We view this equation as defining (for given $z_0$) a curve $\Gamma$  in $C_2\times C$, with coordinates $g^*\times h=(a,b)\times h=: (a,b)\times (\alpha,\beta)$.  Note for future reference that in these coordinates we have $v=a\beta+ s\alpha$, whereas $r$ is given above in terms of $b$.

\medskip

Suppose for the moment that $z_0\in\F_\ell$ and that  $\Gamma$  is absolutely irreducible (over $\F_\ell$). Then, this curve would have absolutely bounded genus, so for $\ell$ large enough  Weil's Riemann-hypothesis for curves over finite fields would imply the existence of (several)  points   $g_0^*\times h_0\in \Gamma(\F_\ell)$, i.e. points in $(C_2\times C)(\F_\ell)$ solving the equation \eqref{E.A}, and we may also require that  $v\neq 0$. 

But now, since $C_2,C$ are rational and smooth over $\Q$, and thus have  the WAP, we may lift $g_0^*,h_0$ to points in $C(\Q)$, integral at $\ell$;  composing then the corresponding maps $\sigma$, starting from $p_0$,    we would  obtain a rational point $\w:=(\xi_0,\mu_0,\lambda_0)\in X_0(\Q)$, integral at $\ell$ and such that the reduction modulo $\ell$ of $\lambda_0:=\lambda(\w)$ is defined and equal to $z_0$. 

Finally, take  a point $\tilde \x\in X(\F_\ell)$ with $\lambda(\tilde\x)=z_0$. Now, using once more the transitive action  of $C$ on   the conic $C_{\lambda_0}$ and the WAP for $C$,  we can deform $\w$ to a rational point  on $C_{\lambda_0}$ with reduction $\tilde \x$ modulo $\ell$.

\medskip

We observe that use of the WAP for $C$ (in practice the Chinese theorem) allows to deal simultaneously with any finite set of primes $\ell$, provided they are large enough.

\medskip

To conclude we have then to investigate the possible values $z_0\in k$ making  the above curve $\Gamma$ reducible over $\bar k$ (where for the present purpose we take $k=\F_\ell$).  For this task, in the above formulae we take $g^*,h$ as generic points of $C_2/k, C/k$ related by the equation \eqref{E.A}.

We have a rational map $\psi:C_2\times C\to \P_1^2$, $(a,b)\times (\alpha,\beta)\mapsto (r,v)$, where $r$ is given by the above formula and $v=a\beta+s\alpha$.  It is easy to check that $\psi$ has degree $8$. Indeed, $[k(C_2):k(b^2)]=4$ and $v$ has degree $2$ as a map on $C$, for fixed $a,b$. 

Denoting $w:=b^2$, the extension $k(C_2)/k(r)$ is generated by $\sqrt w, \sqrt{2-w}$, whereas $k(C_2\times C)=k(C_2)(v,\alpha)$. Now,  a minimal equation for $\alpha$ is $(a\alpha)^2+(v-s\alpha)^2=a^2$, i.e. $(a^2+s^2)\alpha^2-2vs\alpha+v^2-a^2=0$, with discriminant $4(v^2s^2-(v^2-a^2)(a^2+s^2))=4a^2(a^2+s^2-v^2)=4a^2(f(r)-v^2)$. 

Then we find that $k(C_2\times C)=k(r,v)(\sqrt w,\sqrt{2-w},\sqrt{f(r)-v^2})$. 

Now, the equation  \eqref{E.A}  defines an irreducible curve $\Delta$ in the $(r,v)$-plane, and we wish to check whether its lift  $\Gamma$ to $C_2\times C$ remains irreducible.

 If 
$\Lambda:=w(4z_0+8r-7)=(4z_0-25)w+72$, then the equation for $\Gamma$ reads $4v^2\Lambda=wf'(r)^2$. Hence above $\Delta$ the curve may split only if $\Lambda$ becomes a square in the field $k(\sqrt w,\sqrt{2-w},\sqrt{f(r)-v^2})$, where $v$ is obtained using the equation for $\Delta$, i.e. the last square root is to be replaced by $\sqrt{(4\Lambda f(r)-wf'(r)^2)/\Lambda}$. Note also that $r=9(w^{-1}-4^{-1})$. 

 This may happen only if there is a  multiplicative  combination of $\Lambda, w, 2-w, (4\Lambda f(r)-wf'(r)^2)/\Lambda$ equal to a square in $k(w)$, with odd exponent of $\Lambda$.  In turn, this implies that either $\Lambda$ or $w^3(4\Lambda f(r)-wf'(r)^2)$ has a root $w=0$ or $w=2$, or is constant. The first cases yield $4z_0=-11$, whereas $\Lambda$ constant occurs only for $4z_0=25$.  As to the last mentioned case, this  is easily checked to be impossible. 

\medskip

So, finally we have proved the following  

\begin{thm}\label{T.A}  There is a (computable) number $l_0$ such that if $\cL$ is a finite set of primes $\ell >l_0$ and $\xi_\ell$ are points in the open subset  of $X_0$ defined by $\lambda\neq 25/4, -11/4$, then there exists a rational point $\x\in X_0(\Q)$ such that $\x\equiv \xi_\ell\pmod\ell$ for all $\ell\in\cL$.
\end{thm}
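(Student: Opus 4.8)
The plan is to realise $X$ as a unirational variety over the prime field by means of the two fibrations recalled above, and then, for any prescribed value $z_0$ of the map $\lambda$, to manufacture a rational point of $X_0$ whose reduction modulo each prime of the given finite set $\cL$ is a prescribed point $\xi_\ell\in X(\F_\ell)$ lying in the open subset where $\lambda\neq 25/4,-11/4$.

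First I would fix the degree-$4$ dominant self-maps $\sigma_g\seteq[2]\circ g:X\dashrightarrow X$, $g\in C$, which are defined over the prime field, together with the base point $p_0=(1,1,2)\in X_0(\Q)$. Composing two of them and computing $\lambda(\sigma_h\sigma_g(p_0))$ produces the explicit curve $\Gamma\subset C_2\times C$ cut out by \eqref{E.A}, whose points parametrise the pairs $(g,h)$ with $\lambda(\sigma_h\sigma_g(p_0))=z_0$ (subject to the nondegeneracy $v\neq 0$). The heart of the argument is then a descent in two steps. If $\Gamma$ is absolutely irreducible over $\F_\ell$, then since its genus is bounded independently of $z_0$ and $\ell$, the Weil bound for curves over finite fields furnishes points of $\Gamma(\F_\ell)$ with $v\neq 0$ for every $\ell$ past a computable $l_0$. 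Since $C_2$ and $C$ are smooth rational curves over $\Q$ they satisfy weak approximation, so these $\F_\ell$-points lift to $\ell$-integral points of $C_2(\Q)$ and $C(\Q)$; substituting them into the composite $\sigma_h\sigma_g$ starting from $p_0$ yields an $\ell$-integral point $\w\in X_0(\Q)$ with $\lambda(\w)\equiv z_0\pmod\ell$. Finally, using the simply transitive action of $C$ on the conic $C_{\lambda(\w)}$ together with weak approximation for $C$, I would deform $\w$ --- without changing $\lambda$ --- to a rational point $\x$ reducing modulo $\ell$ to any chosen $\tilde\x\in X(\F_\ell)$ on that conic; weak approximation for $C$, here amounting to the Chinese remainder theorem, lets one handle all primes of $\cL$ simultaneously.

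The one genuinely delicate point is to decide for which $z_0$ the curve $\Gamma$ fails to be absolutely irreducible. For this I would use the degree-$8$ rational map $\psi:C_2\times C\to\P_1\times\P_1$, $(a,b)\times(\alpha,\beta)\mapsto(r,v)$, together with the Kummer-type presentation $k(C_2\times C)=k(r,v)\bigl(\sqrt{w},\sqrt{2-w},\sqrt{f(r)-v^2}\bigr)$, $w=b^2$. Since the irreducible plane curve $\Delta$ in the $(r,v)$-coordinates cut out by \eqref{E.A} is irreducible, its pullback $\Gamma=\psi^{-1}(\Delta)$ can split only if $\Lambda\seteq w(4z_0+8r-7)=(4z_0-25)w+72$ becomes a square, with odd exponent in $\Lambda$, after adjoining those three radicals over the function field of $\Delta$ (where $v$ is eliminated via \eqref{E.A}, so the last radical becomes $\sqrt{(4\Lambda f(r)-wf'(r)^2)/\Lambda}$, and $r$, $w$ are related by $r=9(w^{-1}-4^{-1})$). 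A short multiplicative-independence analysis of $\Lambda$, $w$, $2-w$ and $w^3(4\Lambda f(r)-wf'(r)^2)$ in $k(w)$ forces either $\Lambda$ or $w^3(4\Lambda f(r)-wf'(r)^2)$ to vanish at $w=0$ or $w=2$, or $\Lambda$ to be constant; tracing these cases back gives precisely $z_0=-11/4$ or $z_0=25/4$, the excluded values. I expect the main obstacle to be this reducibility analysis --- keeping honest track of the three independent square roots and of the effect of the $\Delta$-relation --- together with making $l_0$ genuinely computable by bounding, uniformly in $z_0$, the genus of $\Gamma$ and the error term in the Weil estimate. Upgrading to the full WWAP (removing the restriction $\lambda\neq 25/4,-11/4$) would require a separate treatment of those two fibres, which I would either do directly on the corresponding conics or defer to \cite{CT-S-SD}.
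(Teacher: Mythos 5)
Your proposal follows essentially the same route as the paper's own proof: the composite maps $\sigma_h\sigma_g$ applied to $p_0=(1,1,2)$, the curve $\Gamma\subset C_2\times C$ cut out by equation \eqref{E.A}, Weil's bound for absolutely irreducible curves over $\F_\ell$, lifting via weak approximation on $C_2$ and $C$, a final $C$-translation to hit the prescribed $\tilde\x$, and the Kummer-style reducibility analysis of $\Gamma$ through the degree-$8$ map $\psi$ identifying $z_0=25/4,-11/4$ as the only bad values. It is a faithful reconstruction of the argument, down to the form of $\Lambda=(4z_0-25)w+72$ and the candidates whose vanishing at $w=0,2$ or constancy must be ruled out.
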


\begin{rem}  To get rid of the restrictions on the $\lambda$-coordinate (i.e. the above $z_0$), several methods are available.  For instance we could start  with  a different $p_0$, or use further maps similar to the $\sigma_g$ (possibly  obtained by multiplication by integers $>2$).  

It seems also quite feasible to carry out a similar proof for approximations in $X(\Q_\ell)$ with arbitrary accuracy, instead of merely modulo $\ell$, then achieving the full WWAP.  Since this would not add any further idea, and since the result is proved in a different way (and greater generality)  in \cite{CT-S-SD}, we do not pursue this task in the present paper. 
\end{rem}




\bigskip 

\noindent{\tt Acknowledgements}. We thank M. Borovoi, F. Catanese, J.-L. Colliot-Th\'el\`ene, P. D\`ebes, A. Fehm, M. Fried, D. Harari, P. Oliverio  and P. Sarnak for their interest and important comments. 
We also acknowledge the support of the ERC grant `Diophantine Problems' in this research, and the University of Konstanz for invitation to a school where the present paper started to be conceived. 

\bigskip\bigskip

\vfill

Pietro Corvaja \hfill Umberto Zannier

Universit\`a di Udine\hfill Scuola Normale Superiore

Via delle Scienze  \hfill Piazza dei Cavalieri, 7 

33100 Udine -ITALY \hfill 56126 Pisa - ITALY

pietro.corvaja@uniud.it\hfill u.zannier@sns.it

\end{document}